\pgfplotsset{compat = 1.7}
\newtheorem{nummer}{ }
\newtheorem{thm}[nummer]{\bf Theorem}
\newtheorem{prp}[nummer]{\bf Proposition}
\newtheorem{lem}[nummer]{\bf Lemma}
\newtheorem{cor}[nummer]{\bf Corollary}
\newtheorem{fct}[nummer]{\bf Fact}
\newcommand{\ie} {\sl i.e.}
\newcommand{\crat}{\operatorname{cr}}
\newcommand{\kringl} {\raisebox{0.3ex}{\text{\tiny{$\circ$}}}}
\newcommand{\N}{\mathds{N}}
\newcommand{\R}{\mathds{R}}
\newcommand{\nO}{\mathscr{O}}
\newcommand{\dritt}[2]{#1\mathbin{\#}#2}
\newcommand{\Inv}[1]{I_{#1}}
\newcommand{\C}{\Gamma}
\newcommand{\Cabc}{\C_{\alpha,\beta,\gamma}}
\newcommand{\Cab}{\C_{a,b}}
\newcommand{\konj}[1]{\bar{#1}}
\def\opargproof[#1]{\par\noindent {\bf #1 }}
\definecolor{darkgreen}{rgb}{0,.6,0}
\begin{document}
\begin{center}
{\Large\bf Constructing Cubic Curves with Involutions} 

\medskip

{\small Lorenz Halbeisen}\\[1.2ex] 
{\scriptsize Department of Mathematics, ETH Zentrum,
R\"amistrasse\;101, 8092 Z\"urich, Switzerland\\ lorenz.halbeisen@math.ethz.ch}\\[1.8ex]
{\small Norbert Hungerb\"uhler}\\[1.2ex] 
{\scriptsize Department of Mathematics, ETH Zentrum,
R\"amistrasse\;101, 8092 Z\"urich, Switzerland\\ norbert.hungerbuehler@math.ethz.ch}
\end{center}

\hspace{5ex}{\small{\it key-words\/}: cubic curve, line involution, ruler constructions, elliptic curve}

\hspace{5ex}{\small{\it 2020 Mathematics Subject 
Classification\/}: {\bf 51A05}\ \,51A20}

\begin{abstract}\noindent
In 1888, Heinrich Schroeter provided a ruler construction for points on cubic curves 
based on line involutions. Using Chasles' Theorem and the terminology
of elliptic curves, we give a simple proof of Schroeter's construction. In addition,
we show how to construct tangents and additional points on the curve 
using another ruler construction which is also based on line involutions.
\end{abstract}

\section{Introduction}
Heinrich Schroeter gave in~\cite{Schroeter} a surprisingly simple
ruler construction to generate points on a cubic curve. Since he 
did not provide a formal proof for the construction,  
we would like to present this here.
Schroeter's construction can be interpreted as an 
iterated construction of line involutions. Thus, we first define
the notion of a line involution with cross-ratios, 
and then we show how one can construct line 
involutions with ruler only. 

For the sake of simplicity,
we introduce the following terminology: For two distinct points $P$ and
$Q$ in the plane, $PQ$ denotes the line through $P$ and $Q$,
$\overline{PQ}$ denotes the distance between $P$ and~$Q$, and for 
two distinct lines $l_1$ and $l_2$, $l_1\wedge l_2$ denotes the 
intersection point of $l_1$ and $l_2$. We tacitly assume that the plane
is the real projective plane, and therefore, $l_1\wedge l_2$ is defined 
for any distinct lines $l_1$ and $l_2$.  For the cross-ratio of 
four lines $a,b,x,y$ of a pencil we use the notation $\crat(a,b,x,y)$.

\noindent{\bf Line involution.} Given a pencil. A line involution $\Lambda$ is a
mapping which maps each line $l$ of the pencil to a so-called conjugate line $\konj{l}$ 
of the pencil, such that the following conditions are satisfied:
\begin{itemize}
\item $\Lambda$ is an involution, {\ie}, $\Lambda\kringl\Lambda$ is the identity, in particular we have
$\Lambda(\konj{l})=l$.
\item Given three different pairs of conjugate lines $a,\konj{a}$,
$b,\konj{b}$, $c,\konj{c}$, and let $l_1,l_2,l_3,l_4$ be four lines among
$a,\konj{a},b,\konj{b},c,\konj{c}$ from three different pairs of conjugate
lines, then $$\crat\left(l_1,l_2,l_3,l_4\right)=\crat\left(\konj{l}_1,
\konj{l}_2,\konj{l}_3,\konj{l}_4\right).$$
\end{itemize}

Notice that any line involution is defined by two different pairs of conjugate lines.
We shall use the following construction for line involutions (for the correctness
of the construction see Chasles~\cite[Note X, \S 34, (28), p.~317]{Chasles}):
Given two pairs $a,\konj{a}$ and $b,\konj{b}$ of conjugate lines which meet in~$P$.
Suppose, we want to find the conjugate line $\konj d$ of a line $d$ from the same pencil.
Choose a point $D\neq P$ on $d$ and two lines  through $D$ which meet
$a$ and $b$ in the points $A$ and $B$, and $\konj a$ and $\konj b$ in the points
$\konj A$ and $\konj B$, respectively (see Fig.\,\ref{fig-3}).
Let $\konj D=A\konj B\wedge\konj AB$. Then the conjugate line $\konj d$ of $d$
with respect to the line involution defined by $a,\konj{a},b,\konj{b}$ is the line $P\konj D$.

Vice-versa, let $A,\konj A$ and $B,\konj B$ be two pairs of different points and
$D=AB\wedge \konj A\konj B$, $\konj D=A\konj B\wedge \konj AB$.
Then, for an arbitrary point $P\notin\{A,\konj A,B,\konj B,D,\konj D\}$, the lines $a=PA,\konj a=P\konj A$, $b=PB,\konj b=P\konj B$,
and $d=PD, \konj d=P\konj D$ are conjugate lines.
\begin{figure}[h!]
\begin{center}
\includegraphics{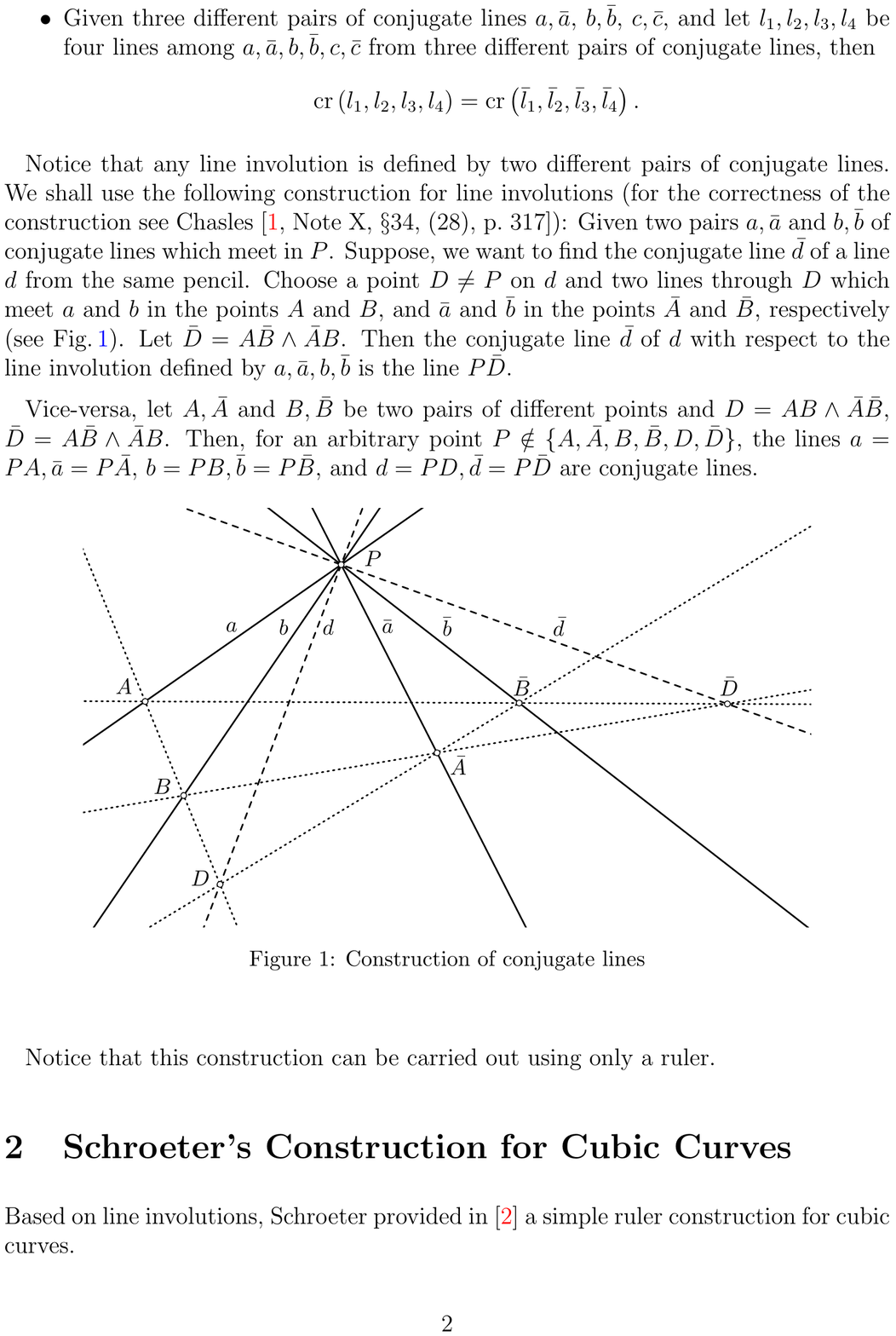}
\captionof{figure}{Construction of conjugate lines}\label{fig-3}
\end{center}
\end{figure}

Notice that this construction can be carried out using only a ruler.

\section{Schroeter's Construction for Cubic Curves}

Based on line involutions, Schroeter provided in~\cite{Schroeter} 
a simple ruler construction for cubic curves.

\begin{center}
\begin{minipage}{.9\linewidth}
{\bf Schroeter's Construction.} {\sl Let\/ 
$A,\konj{A}$, $B,\konj{B}$, $C,\konj{C}$ be six pairwise distinct points 
in a plane such that no four points are collinear
and the three pairs of points\/ $A,\konj{A}$, $B,\konj{B}$,
$C,\konj{C}$ are not the pairs of opposite vertices of the same 
complete quadrilateral. Now, for any two pairs of 
points\/ $P,\konj{P}$ and\/ $Q,\konj{Q}$,
we define a new pair\/ $S,\konj{S}$ of points by stipulating 
$$S\;:=\;PQ\wedge\konj{P}\konj{Q}\qquad\text{and}\qquad 
\konj{S}\;:=\;P\konj{Q}\wedge \konj{P}Q\,.$$
Then all the points constructed in this way lie on a cubic curve.}
\end{minipage}
\end{center}
Points $S,\konj S$ which are constructed by Schroeter's construction 
will be called Schroeter points or pairs of Schroeter points.

Notice first that with Schroeter's construction, we 
always construct pairs of conjugate lines: For any point 
$R\notin\{P,\konj P,Q,\konj Q, S,\konj S\}$ the lines
$RP, R\konj P, RQ, R\konj Q, RS, R\konj S$ are pairs of conjugate
lines with respect to the same line involution.  
Further notice that if the three pairs of 
points are opposite vertices 
of the same complete quadrilateral, then the construction gives us
no additional points.

At first glance, it is somewhat surprising that all the points we
construct lie on the same cubic curve, which is defined by three
pairs of points (recall that a cubic curve is defined by $9$~points).
The reason is that we have three {\em pairs\/} of points and not just $6$~points.
In fact, if we start with the same $6$~points but pairing them differently,
we obtain a different cubic curve. It is also not clear whether the construction
generates infinitely many points of the curve. 
Schroeter claims in~\cite{Schroeter} that this is the case, but, as we
will see in the next section, it may happen that the construction gives 
only a finite number of points.

\section{A Proof of Schroeter's Construction}

It is very likely that Schroeter discovered his construction 
based on his earlier work on cubics (see \cite{Schroeter1,Schroeter2}).
However, he did not give a rigorous proof of his construction, and the
fact that he claimed wrongly that the construction generates always infinitely many
points of the curve might indicate that he overlooked something. 
Below we give a simple proof of Schroeter's construction using
Chasles' Theorem (see Chasles~\cite[Chapitre IV, \S 8, p.\,150]{Chasles})
and the terminology of elliptic curves.

\begin{thm}[Chasles' Theorem]\label{thm:chasles}
If a hexagon\/ $ABC\konj{A}\konj{B}\konj{C}$ is inscribed in a cubic curve~$\C$
and the points\/ $AB\wedge\konj{A}\konj{B}$ and $BC\wedge\konj{B}\konj{C}$
are on~$\C$, then also\/ $C\konj{A}\wedge \konj{C}A$ is on~$\C$ (see Fig.\,\ref{fig-chasles}). 
\end{thm}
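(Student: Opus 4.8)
The plan is to prove Chasles' Theorem by exploiting the group law on the cubic $\C$. Equip $\C$ with its standard elliptic-curve group structure, fixing some point $O$ on $\C$ as the neutral element, so that three points of $\C$ are collinear if and only if they sum to $O$. (If $\C$ is singular or reducible one works with the group of smooth points, or argues by a degeneration/continuity argument from the smooth case; I would handle the generic smooth case first and remark that the general case follows by specialization.) The six vertices $A,B,C,\konj A,\konj B,\konj C$ lie on $\C$ by hypothesis, as do the two intersection points $U:=AB\wedge\konj A\konj B$ and $V:=BC\wedge\konj B\konj C$; the claim is that $W:=C\konj A\wedge \konj C A$ also lies on $\C$.

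First I would record the collinearity relations as equations in the group. Collinearity of $A,B,U$ gives $A+B+U=O$; collinearity of $\konj A,\konj B,U$ gives $\konj A+\konj B+U=O$; hence $A+B=\konj A+\konj B$. Similarly $B,C,V$ and $\konj B,\konj C,V$ collinear give $B+C=\konj B+\konj C$. Subtracting these two relations yields $A-C=\konj A-\konj C$, i.e. $A+\konj C=\konj A+C$. Now let $W':=C\konj A\wedge\C$ be the third intersection of the line $C\konj A$ with $\C$, so $C+\konj A+W'=O$, and let $W'':=\konj C A\wedge\C$ be the third intersection of the line $\konj C A$ with $\C$, so $\konj C+A+W''=O$. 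From $A+\konj C=\konj A+C$ we get $W'=-(C+\konj A)=-(A+\konj C)=W''$. Thus the lines $C\konj A$ and $\konj C A$ meet $\C$ in the same third point $W'=W''$; since two distinct lines meet in a unique point of the plane, that common point is exactly $W=C\konj A\wedge\konj C A$, and it lies on $\C$. This is the whole argument.

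The one genuine obstacle is \emph{degeneracy}: the group-law statement ``three points are collinear iff they sum to $O$'' and the step ``the third intersection point is well defined'' require the relevant lines to be honest secants and the points involved to be smooth points of $\C$ in general position (e.g. $C\neq\konj A$, $A\neq\konj C$, and the lines $C\konj A$, $\konj C A$ genuinely distinct). I would state the theorem for an irreducible cubic and its smooth locus, note that the configuration hypotheses of Schroeter's construction (no four of the points collinear, etc.) rule out the bad coincidences, and remark that tangency cases are absorbed by the usual convention that a tangent line meets the curve with multiplicity two at the point of tangency. For a possibly singular or reducible $\C$, or to avoid fixing a group structure at all, the same relations can be phrased purely via linear equivalence of divisors: $A+B\sim\konj A+\konj B$ and $B+C\sim\konj B+\konj C$ as divisors cut by lines, whence $C\konj A$ and $\konj C A$ cut linearly equivalent divisors $C+\konj A+W'\sim \konj C+A+W''$ with $C+\konj A\sim A+\konj C$ forcing $W'\sim W''$ and hence $W'=W''$ (two effective degree-one divisors that are linearly equivalent on a curve of positive arithmetic genus are equal). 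Either formulation gives a clean two-line computation once the bookkeeping of collinearity is set up.
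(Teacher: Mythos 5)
Your argument is correct in substance, but note that the paper does not prove this statement at all: it quotes Chasles' Theorem as a classical result with a reference to Chasles' book, and then uses it (together with the group law) to derive everything else. You go the opposite way and \emph{derive} Chasles' Theorem from the group law: from $A+B+U=\nO=\konj{A}+\konj{B}+U$ and $B+C+V=\nO=\konj{B}+\konj{C}+V$ you get $A+\konj{C}=\konj{A}+C$, hence the third intersections of $C\konj{A}$ and $\konj{C}A$ with $\C$ coincide, and this common point is $C\konj{A}\wedge\konj{C}A$. Two caveats are worth flagging. First, ``three points are collinear iff they sum to the neutral element'' is literally true only when the base point is a point of inflection (as the paper indeed chooses $\nO$ to be); for a general base point the sum of a line section is a fixed constant, which, as your computation shows, cancels anyway, so this is only a matter of stating the normalization correctly. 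Second, there is a potential circularity: the standard elementary proofs of associativity of the chord--tangent group law themselves rest on Cayley--Bacharach/Chasles-type incidence theorems, so if you invoke the group law as a black box you may be assuming a statement equivalent to the one being proved. Your divisor-theoretic reformulation (line sections are linearly equivalent, and two linearly equivalent effective degree-one divisors on a curve of arithmetic genus one coincide, by Riemann--Roch) avoids this and is the cleaner way to present the argument; it also covers irreducible singular cubics via their smooth locus, while the genuinely reducible case needs the Cayley--Bacharach viewpoint or the degeneration argument you sketch. In exchange for these foundational inputs, your route gives a self-contained two-line proof, whereas the paper buys brevity by citing the classical literature and keeps the group law only for the bookkeeping in Theorem~\ref{thm:main}.
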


\begin{figure}[h!]
\begin{center}
\includegraphics{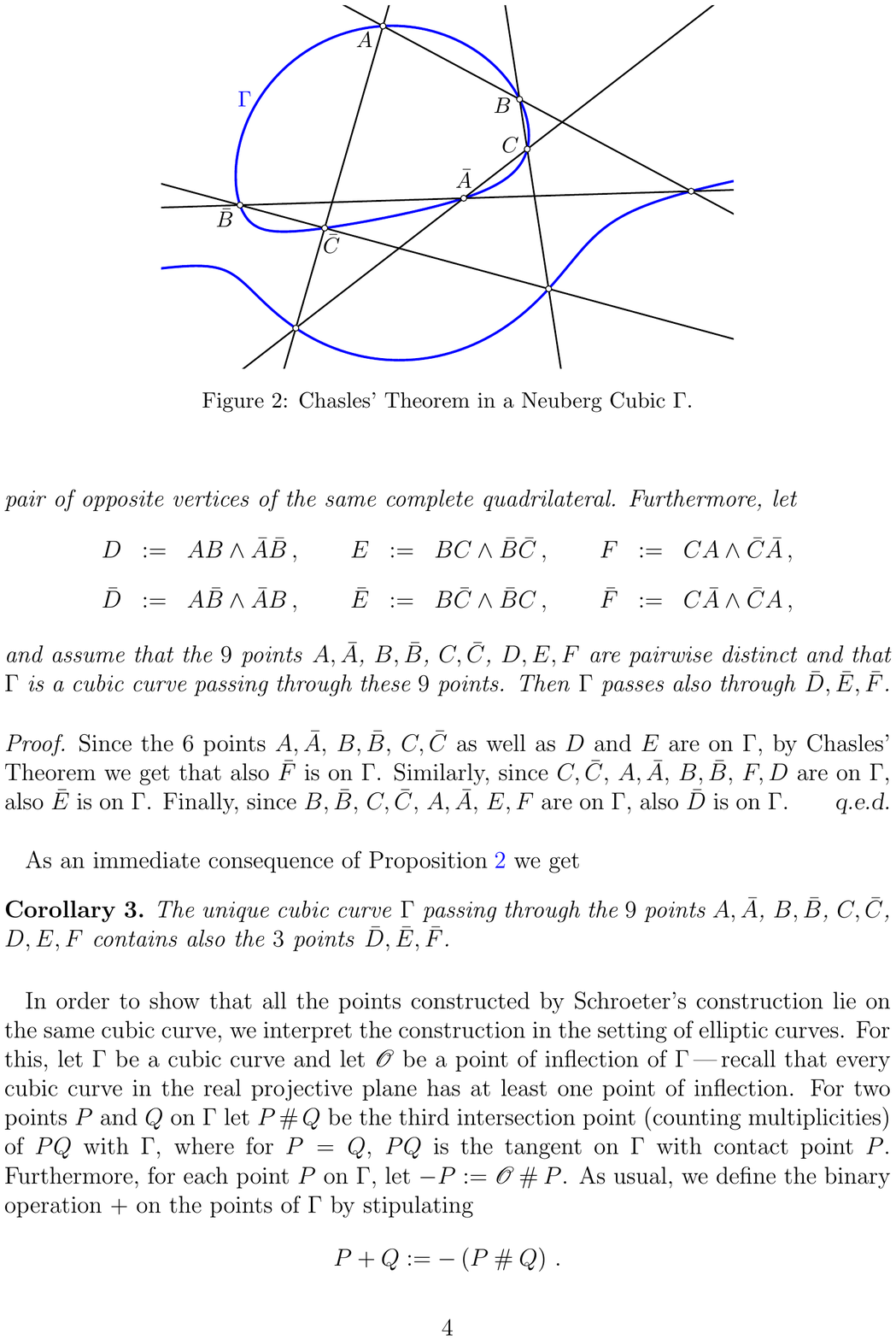}
\caption{Chasles' Theorem in a Neuberg Cubic $\C$.}\label{fig-chasles}
\end{center}
\end{figure}

With Chasles' Theorem we can prove the following 

\begin{prp}\label{prp:Chasles89}
Let\/ $A,\konj{A}$, $B,\konj{B}$, $C,\konj{C}$ be six pairwise distinct points 
in a plane such that no four points are collinear
and none of the pairs of 
points\/ $A,\konj{A}$, $B,\konj{B}$,
$C,\konj{C}$ is a pair of opposite vertices of the same complete quadrilateral.
Furthermore, let
$$
\begin{array}{rclrclrcl}
D&:=&AB\wedge\konj{A}\konj{B}\,,\qquad 
E&:=&BC\wedge\konj{B}\konj{C}\,,\qquad
F&:=&CA\wedge\konj{C}\konj{A}\,,\\[2ex]
\konj{D}&:=&A\konj{B}\wedge\konj{A}B\,,\qquad
\konj{E}&:=&B\konj{C}\wedge\konj{B}C\,,\qquad
\konj{F}&:=&C\konj{A}\wedge\konj{C}A\,,
\end{array}$$
and assume that the $9$~points\/ $A,\konj{A}$, $B,\konj{B}$, $C,\konj{C}$, $D,E,F$
are pairwise distinct and that\/ $\C$~is a cubic curve
passing through these $9$~points. Then\/ $\C$~passes also through\/ $\konj{D},
\konj{E},\konj{F}$.
\end{prp}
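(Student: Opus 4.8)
The plan is to apply Chasles' Theorem (Theorem~\ref{thm:chasles}) three times, once for each of the points $\konj D$, $\konj E$, $\konj F$, each time with a suitable permutation of the six given points playing the role of the hexagon. Recall that Chasles' Theorem says: if a hexagon $PQR\konj P\konj Q\konj R$ is inscribed in a cubic $\C$ and the two points $PQ\wedge\konj P\konj Q$ and $QR\wedge\konj Q\konj R$ lie on $\C$, then $R\konj P\wedge\konj R P$ also lies on $\C$. The key observation is that the hypotheses already put $A,\konj A,B,\konj B,C,\konj C$ on $\C$ together with $D=AB\wedge\konj A\konj B$, $E=BC\wedge\konj B\konj C$, and $F=CA\wedge\konj C\konj A$, and these are exactly the ``input'' points Chasles needs — we just have to line up the labels correctly so that the conclusion point is the one we want.

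First I would show $\konj F\in\C$. Apply Chasles' Theorem to the hexagon $ABC\konj A\konj B\konj C$ itself: the inscribed vertices are $A,B,C,\konj A,\konj B,\konj C\in\C$; the required auxiliary points are $AB\wedge\konj A\konj B=D\in\C$ and $BC\wedge\konj B\konj C=E\in\C$; hence the conclusion gives $C\konj A\wedge\konj C A\in\C$, which is exactly $\konj F$. Next, to get $\konj D\in\C$, I would apply Chasles to the hexagon $CAB\konj C\konj A\konj B$ (a cyclic relabelling $A\mapsto C$, $B\mapsto A$, $C\mapsto B$): now the auxiliary points are $CA\wedge\konj C\konj A=F\in\C$ and $AB\wedge\konj A\konj B=D\in\C$, both on $\C$ (the second trivially, the first by hypothesis), so the conclusion $B\konj C\wedge\konj B C\in\C$ — wait, that is $\konj E$; so this relabelling actually yields $\konj E$. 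To get $\konj D=A\konj B\wedge\konj A B$ I would instead use the relabelling $A\mapsto B$, $B\mapsto C$, $C\mapsto A$, i.e. the hexagon $BCA\konj B\konj C\konj A$, whose auxiliary points are $BC\wedge\konj B\konj C=E\in\C$ and $CA\wedge\konj C\konj A=F\in\C$, giving the conclusion $A\konj B\wedge\konj A B=\konj D\in\C$. Thus the three cyclic shifts of the hexagon $ABC\konj A\konj B\konj C$ produce $\konj F$, $\konj E$, $\konj D$ respectively, and we are done.

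The only real subtlety — and the place I expect to spend a sentence of care rather than a genuine obstacle — is checking that each application of Chasles' Theorem is legitimate, i.e. that the configuration is nondegenerate enough for the theorem to apply: the six vertices of the hexagon must be distinct (guaranteed by the hypothesis that $A,\konj A,B,\konj B,C,\konj C$ are pairwise distinct), the relevant sides must be genuine lines (no two of the paired points coincide, again guaranteed), and the auxiliary intersection points $D,E,F$ must be well-defined and distinct from the vertices so that ``lies on $\C$'' is a meaningful incidence. These are exactly the conditions packaged into the hypotheses ``no four points are collinear,'' ``none of the pairs is a pair of opposite vertices of the same complete quadrilateral,'' and ``the $9$ points $A,\konj A,B,\konj B,C,\konj C,D,E,F$ are pairwise distinct.'' I would remark that these hypotheses are inherited verbatim by each cyclic relabelling (since cyclic permutation of $(A,B,C)$ is a symmetry of the whole configuration), so no new conditions are needed for the second and third applications. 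With that bookkeeping in place, the proof is just the three-line argument above.
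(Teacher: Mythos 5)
Your proof is correct and follows essentially the same route as the paper: three applications of Chasles' Theorem to the cyclic relabellings of the hexagon $ABC\konj{A}\konj{B}\konj{C}$, using the pairs $(D,E)$, $(F,D)$, $(E,F)$ of auxiliary points to obtain $\konj{F}$, $\konj{E}$, $\konj{D}$ respectively. The small mid-paragraph correction about which shift yields $\konj{D}$ versus $\konj{E}$ resolves itself correctly, so nothing further is needed.
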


\begin{proof}
Since the $6$~points $A,\konj{A}$, $B,\konj{B}$, $C,\konj{C}$ as 
well as\/ $D$ and\/ $E$ are on~$\C$, by Chasles' Theorem we get that 
also\/ $\konj{F}$ is on~$\C$. Similarly,
since\/ $C,\konj{C}$, $A,\konj{A}$, $B,\konj{B}$, $F,D$ are on~$\C$,
also\/ $\konj{E}$ is on~$\C$. Finally, since 
$B,\konj{B}$, $C,\konj{C}$, $A,\konj{A}$, $E,F$ are on~$\C$,
also\/ $\konj{D}$ is on~$\C$.
\end{proof}

As an immediate consequence of Proposition\;\ref{prp:Chasles89} we get

\begin{cor}\label{cor:consequence}
The unique cubic curve\/ $\C$ passing through the $9$~points 
$A,\konj{A}$, $B,\konj{B}$, $C,\konj{C}$, $D,E,F$ contains also the 
$3$~points\/ $\konj{D},\konj{E},\konj{F}$.
\end{cor}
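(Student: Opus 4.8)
The plan is to read the corollary off from Proposition~\ref{prp:Chasles89}: one has only to check that a cubic through the nine listed points exists and is unique, and that the proposition applies to it with no further hypotheses.

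For existence, note that the ternary cubic forms constitute a $10$-dimensional vector space and that vanishing at a prescribed point is a single linear condition; hence the cubic forms vanishing at all nine pairwise distinct points $A,\konj A$, $B,\konj B$, $C,\konj C$, $D,E,F$ span a space of dimension at least $10-9=1$, so at least one cubic $\C$ through them exists. The assumptions of the corollary are exactly those of Proposition~\ref{prp:Chasles89} together with the statement ``$\C$ is a cubic through these nine points'', so that proposition applies verbatim and gives $\konj D,\konj E,\konj F\in\C$. In particular \emph{every} cubic through the nine points contains $\konj D,\konj E,\konj F$, which is the substantive part of the claim.

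It remains to argue that $\C$ is unique, and this is the only step requiring care. Uniqueness is equivalent to the nine points imposing independent conditions on cubics; by the classical analysis of linear systems of plane cubics this can fail only when the configuration is degenerate --- roughly, when at least four of the points are collinear or at least seven of them lie on a (possibly reducible) conic. I would exclude these cases using the standing hypotheses. Since $D$, $E$, $F$ each lie on two of the joining lines of the six base points, the assumed distinctness of the nine points forces every such joining line to pass through at most three of them, and ``no four of $A,\konj A,B,\konj B,C,\konj C$ collinear'' handles the remaining lines; an analogous count, invoking that the three pairs are not opposite vertices of one complete quadrilateral, rules out seven points on a conic. Carrying out this finite case distinction is the main obstacle; once it is settled, $\C$ is the unique cubic through the nine points, and by the previous paragraph it passes through $\konj D,\konj E,\konj F$.
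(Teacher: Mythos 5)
The substantive part of your argument is exactly the paper's: the paper states the corollary as an immediate consequence of Proposition~\ref{prp:Chasles89}, giving no further proof, and your dimension count for existence together with the verbatim application of the proposition reproduces that. The trouble is the step you yourself single out as ``the only step requiring care''. First, you do not actually carry it out: the ``finite case distinction'' is only announced. Second, the criterion you propose to check is not the correct one. Four collinear points, or seven points on a conic, do \emph{not} make the conditions on cubics dependent: a cubic through four collinear points must contain that line, and the residual family (line plus conics through the remaining points) has exactly the expected dimension; the analogous computation works for seven points on a conic. The classical failure modes for at most nine points are five points on a line, eight points on a conic, or --- and this is the case your counting can never exclude --- the nine points forming the base locus of a pencil of cubics (a complete intersection of two cubics), which can occur with no three of the points collinear and no six on a conic. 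So even if you completed the incidence case analysis suggested, it would not establish uniqueness.

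If you want to prove the uniqueness asserted in the statement (the paper simply takes it for granted), the efficient route is to use Proposition~\ref{prp:Chasles89} itself: if two distinct cubics passed through the nine points, the proposition would force both of them to contain all twelve points $A,\konj A,B,\konj B,C,\konj C,D,\konj D,E,\konj E,F,\konj F$; provided these twelve points are distinct (as is implicitly assumed in the paper from this point on), B\'ezout allows two cubics with no common component at most nine common points, so the two cubics must share a component, and that possibility is ruled out by a short check (a shared line would force at least eight of the twelve points onto it, a shared conic at least eleven onto the conic), which is far less than the classification your sketch would require. In particular this argument also disposes of the pencil case, which no collinearity or conic count can.
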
 

In order to show that all the points constructed by Schroeter's construction 
lie on the same cubic curve, we interpret the construction in the setting
of elliptic curves. For this, let $\C$ be a cubic curve and let $\nO$ be a 
point of inflection of~$\C$\,---\,recall that every cubic curve in the real
projective plane has at least one point of inflection. For two points 
$P$ and $Q$ on $\C$ let $\dritt PQ$ be the third intersection point (counting
multiplicities) of $PQ$ with $\C$, where for $P=Q$, $PQ$ is the tangent on $\C$ 
with contact point $P$. Furthermore, for each point $P$ on $\C$, let $-P:=\dritt{\nO}P$.
As usual, we define the binary operation~$+$ on the points of~$\C$ by stipulating
$$P+Q:=-\left(\dritt PQ\right)\,.$$
Notice that $-P+P=\nO$ and, since $\nO$ is a point of inflection, we have $-\nO=\nO$.
It is well known that the operation~$+$ is associative and the structure 
$(\C,\nO,+)$ is an abelian group with neutral element~$\nO$, which is called
an {\it elliptic curve}.

Now, let $\C$ be the cubic curve passing through 
$A,\konj{A},B,\konj{B},C,\konj{C},\linebreak[1]D,\linebreak[1]E,F$ and let
$\nO$ be a point of inflection of~$\C$. Then, by construction of $\C$ we have,
for example, $\dritt AB=\dritt{\konj{A}}{\konj{B}}$, or equivalently,
$-(A+B)=-\left(\konj{A}+\konj{B}\right)$. 

\begin{lem}\label{lem:x}\begin{enumerate}[label=(\alph*)]
\item Let\/ $P,Q,\konj P,\konj Q$ be pairwise distinct points on a cubic curve~$\C$.
If\/ $S:=PQ\wedge \konj P\konj Q \in \C$ and\/ $\bar S:=P\konj Q\wedge \konj PQ\in\C$, then 
$\dritt PP=\dritt{\konj P}{\konj P}\in\C$, $\dritt QQ=\dritt{\konj Q}{\konj Q}\in\C$,
and\/ $\dritt SS=\dritt{\konj S}{\konj S}\in\C$.

\item Vice versa, if\/ $P':=\dritt PP=\dritt{\konj P}{\konj P}\in\C$ 
for two points\/ $P,\konj P\in\C$,
then we have for all\/ $Q\in\C$ the following: 
If\/ $S:=\dritt PQ$ and\/ $\konj Q=\dritt S\konj P$, then\/ 
$\konj S=P\konj Q\wedge \konj PQ\in\C$
and\/ $Q':=\dritt QQ=\dritt{\konj Q}{\konj Q}\in\C$.
\end{enumerate}
\end{lem}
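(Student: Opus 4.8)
The plan is to translate both statements entirely into the group language of the elliptic curve $(\C,\nO,+)$, where the purely algebraic identities become transparent, and then to use the geometric characterizations of $\dritt{}{}$ and of the Schroeter points (together with Corollary~\ref{cor:consequence}) to convert membership in $\C$ back and forth with the algebraic relations. The two key dictionary entries are: for points $X,Y,Z$ on $\C$, the three are collinear \sliff\ $X+Y+Z=\dritt\nO\nO+\nO=\nO$ (equivalently $Z=\dritt XY$ \sliff\ $Z=-(X+Y)$); and $\dritt XX$ is on $\C$ and equals $-2X$ automatically, so the content of a statement like ``$\dritt PP=\dritt{\konj P}{\konj P}$'' is simply the equation $2P=2\konj P$ in the group, i.e. $P-\konj P$ is a $2$-torsion point (or $0$).

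For part~(a): since $P,Q,\konj P,\konj Q,S,\konj S$ all lie on $\C$, the hypotheses $S=PQ\wedge\konj P\konj Q$ and $\konj S=P\konj Q\wedge\konj PQ$ say that $P,Q,S$ are collinear and $\konj P,\konj Q,S$ are collinear, hence $S=\dritt PQ=\dritt{\konj P}{\konj Q}$, which gives $P+Q=\konj P+\konj Q$; likewise the $\konj S$ collinearities give $P+\konj Q=\konj P+Q$. Adding these two equations yields $2P=2\konj P$, and subtracting yields $2Q=2\konj Q$. Re-reading $2P=2\konj P$ through the dictionary gives exactly $\dritt PP=\dritt{\konj P}{\konj P}$ (and this point lies on $\C$ since $\C$ is a curve); similarly for $Q$. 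For the $S$-statement, note $S=\dritt PQ=-(P+Q)$ and $\konj S=\dritt{P}{\konj Q}=-(P+\konj Q)$, so $2S=2\konj S$ follows from $P+Q=\konj P+\konj Q$ combined with… in fact directly: $2S-2\konj S = -2(P+Q)+2(P+\konj Q)=2(\konj Q-Q)=0$ since $2Q=2\konj Q$. Hence $\dritt SS=\dritt{\konj S}{\konj S}\in\C$.

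For part~(b): the hypothesis $\dritt PP=\dritt{\konj P}{\konj P}\in\C$ is the single equation $2P=2\konj P$. Given any $Q\in\C$, set $S:=\dritt PQ=-(P+Q)\in\C$ and $\konj Q:=\dritt S{\konj P}=-(S+\konj P)=P+Q-\konj P\in\C$; both points lie on $\C$ by definition of $\dritt{}{}$. Now compute $P+\konj Q=2P+Q-\konj P$ and $\konj P+Q$; their difference is $2P-2\konj P=0$, so $P+\konj Q=\konj P+Q$, which means $P,\konj Q$ and $\konj P,Q$ determine the same third point $-(P+\konj Q)$ on $\C$; call it $\konj S$. Thus $\konj S=P\konj Q\wedge\konj PQ\in\C$, as claimed. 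Finally $2\konj Q-2Q = 2(P-\konj P)+2(Q-Q)\cdot 0$; more carefully, $2\konj Q=2P+2Q-2\konj P=2Q$ using $2P=2\konj P$, so $2Q=2\konj Q$, i.e. $\dritt QQ=\dritt{\konj Q}{\konj Q}\in\C$.

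The only genuine subtlety — and the place to be careful — is the interface between ``a point is the third intersection of a line with $\C$'' and ``two stated lines meet in a point of $\C$'': a priori $P\konj Q\wedge\konj PQ$ is just some point of the projective plane, and one must argue it lies on $\C$. The equation $P+\konj Q=\konj P+Q$ does this, because it forces $\dritt{P}{\konj Q}$ (a point of $\C$, hence on the line $P\konj Q$) to coincide with $\dritt{\konj P}{Q}$ (a point of $\C$, hence on the line $\konj PQ$), and a common point of two distinct lines is their unique intersection. One should also note in passing that the various points are distinct enough for the lines and tangents in question to be well defined; this is where Corollary~\ref{cor:consequence} and the non-degeneracy hypotheses of Schroeter's construction are invoked to guarantee, in the situation where part~(a) is applied, that $D,E,F,\konj D,\konj E,\konj F$ are genuinely on the curve $\C$ through the nine points, so that the group law computations take place on a single honest elliptic curve. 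Beyond that bookkeeping, every step is a one-line manipulation in an abelian group.
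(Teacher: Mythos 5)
Your proof is correct and follows essentially the same route as the paper: translate the collinearity statements into the group law of $(\C,\nO,+)$ with $\nO$ a point of inflection, so that each claim $\dritt XX=\dritt{\konj X}{\konj X}$ becomes $2X=2\konj X$, and then the results are one-line abelian-group manipulations. Your only deviations are cosmetic (you compute $2S=2\konj S$ and, in part (b), $2Q=2\konj Q$ directly instead of exchanging the roles of the pairs or citing part (a)), so no further comparison is needed.
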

\begin{figure}[h!]
\begin{center}
\includegraphics{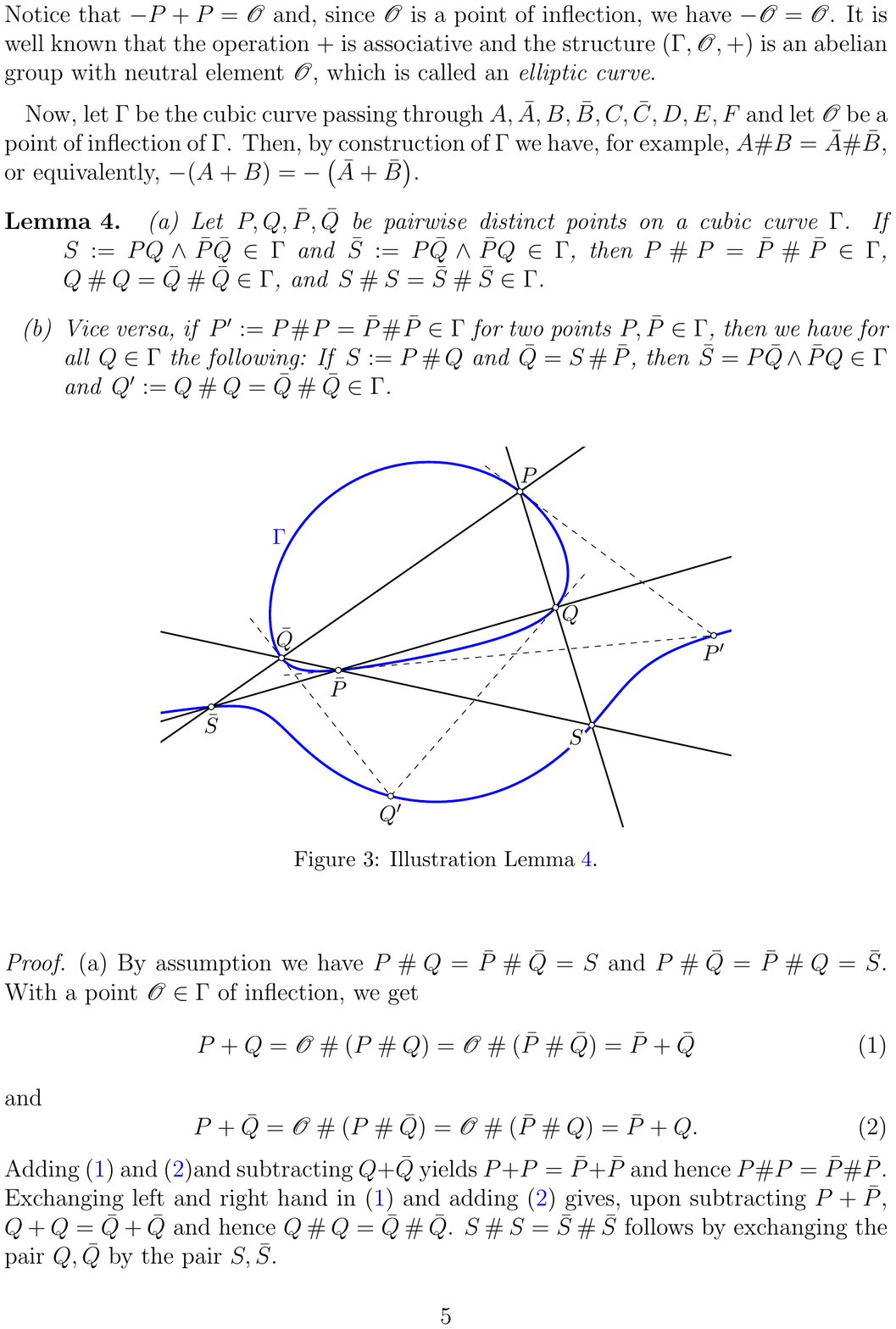}
\caption{Illustration Lemma\;\ref{lem:x}.}\label{fig-tangent}
\end{center}
\end{figure}
\begin{proof}(a)
By assumption we have $\dritt PQ=\dritt{\konj P}{\konj Q}=S$ and 
$\dritt P{\konj Q}=\dritt{\konj P}{Q}=\konj S$.
With a point $\nO\in\C$ of inflection, we get 
\begin{equation}\label{eq:1}
P+Q=\dritt{\nO}{(\dritt PQ)}=\dritt{\nO}{(\dritt {\konj P}{\konj Q})}=\konj P+\konj Q
\end{equation}
and
\begin{equation}\label{eq:2}
P+\konj Q=\dritt{\nO}{(\dritt P{\konj Q})}=\dritt{\nO}{(\dritt {\konj P}{Q})}=\konj P+Q.
\end{equation}
Adding~(\ref{eq:1}) and~(\ref{eq:2})and subtracting $Q+\bar Q$ 
yields $P+P=\konj P+\konj P$ and hence $\dritt PP=\dritt{\konj P}{\konj P}$.
Exchanging left and right hand in~(\ref{eq:1}) and adding~(\ref{eq:2}) 
gives, upon subtracting $P+\konj P$, 
$Q+Q=\konj Q+\konj Q$ and hence $\dritt QQ=\dritt{\konj Q}{\konj Q}$. 
$\dritt SS=\dritt{\konj S}{\konj S}$ follows
by exchanging the pair $Q, \konj Q$ by the pair $S,\konj S$.

(b) For the second part, we proceed as follows: By assumption, 
we have $\dritt PP=\dritt{\konj P}{\konj P}$ and therefore
$P+P=\dritt{\nO}{(\dritt PP)}=\dritt{\nO}{(\dritt{\konj P}{\konj P})}=\konj P+\konj P$. We add $S$ and subtract $P+\konj P$
to get $S+P-\konj P=S+\konj P-P$ or 
$\dritt{(\dritt{\nO}{(\dritt SP)})}{(\dritt{\nO}{\konj P})} = \dritt{(\dritt{\nO}{(\dritt S{\konj P})})}{(\dritt{\nO}P)}$.
It follows that $\dritt{(\dritt S{P})}{\konj P}=\dritt{(\dritt S{\konj P})}P$, {\ie}, 
$\dritt{Q}{\konj P}=\dritt {\konj Q}P=\konj S$. Finally, $\dritt{Q}{Q}=\dritt {\konj Q}{\konj Q}=Q'$ follows from the first part.
\end{proof}

For the sake of simplicity we write $2*P$ for $P+P$.
Let $A,\konj{A}$ be a pair of 
points 
with $\dritt AA=\dritt{\konj A}{\konj A}$
on a cubic curve $\C$,
and with respect to a given point of inflection $\nO$, 
let $T_A:=\konj{A}-A$. Then $A+T_A=\konj{A}$, which implies that
$$2*\konj{A} = 2*(A+T_A)=2*A+2*T_A.$$ 
Now, by assumption we have $2*A=2*\konj{A}$
and therefore we get that $2*T_A=\nO$. In other words, $T_A$ is
a point of order~$2$.

Now we are ready to prove the following

\begin{thm}\label{thm:main}
All the points we obtain by 
Schroeter's construction belong to the same cubic curve.
\end{thm}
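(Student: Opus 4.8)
The plan is to set up the elliptic-curve bookkeeping once and for all, then observe that Schroeter's construction never leaves the cubic $\C$ determined by the three starting pairs. Fix the cubic $\C$ through $A,\konj A,B,\konj B,C,\konj C,D,E,F$, which by Corollary~\ref{cor:consequence} also passes through $\konj D,\konj E,\konj F$; fix a point of inflection $\nO\in\C$ and work in the group $(\C,\nO,+)$. The key invariant is this: call a pair $P,\konj P$ of points on $\C$ \emph{admissible} if $\dritt PP=\dritt{\konj P}{\konj P}$ on $\C$, equivalently (as shown just before the theorem) if $T_P:=\konj P - P$ is a point of order dividing $2$. First I would record that the three starting pairs are admissible: this is exactly Lemma~\ref{lem:x}(a) applied to $P,\konj P=A,\konj A$ with auxiliary pair $B,\konj B$, since $D=AB\wedge\konj A\konj B\in\C$ and $\konj D=A\konj B\wedge\konj A B\in\C$, and similarly for $B,\konj B$ and $C,\konj C$.

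Next I would show that the construction preserves both membership in $\C$ and admissibility. Suppose $P,\konj P$ and $Q,\konj Q$ are admissible pairs on $\C$, and form $S:=PQ\wedge\konj P\konj Q$ and $\konj S:=P\konj Q\wedge\konj P Q$. The point is that $S$ lies on $\C$: since $T_P$ and $T_Q$ have order dividing $2$, we have $\konj P+\konj Q = (P+T_P)+(Q+T_Q) = (P+Q)+(T_P+T_Q)$, so the third point $\dritt{\konj P}{\konj Q}$ differs from $\dritt PQ$ by the $2$-torsion element $T_P+T_Q$; tracing through the definition of $S$ as an intersection of the two lines $PQ$ and $\konj P\konj Q$, one checks $S\in\C$ precisely when these two lines meet $\C$ in a common third point, which happens exactly because $S$ is forced to coincide with a point of $\C$ once we know the group relation. (Here I would spell out: $S = \dritt PQ$ on the line $PQ$ is automatic only if $\dritt PQ=\dritt{\konj P}{\konj Q}$; when $T_P+T_Q\ne\nO$ one argues instead that $S$ is the third intersection of $\konj P\konj Q$ with $\C$ shifted appropriately — the cleanest route is to verify directly that the point $\dritt{\nO}{(P+Q+ (\text{correction}))}$ lies on both lines.) Granting $S,\konj S\in\C$, Lemma~\ref{lem:x}(a) now applies verbatim to the pair $S,\konj S$ and yields $\dritt SS=\dritt{\konj S}{\konj S}\in\C$, i.e. $S,\konj S$ is again an admissible pair on $\C$.

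**The main obstacle** I anticipate is the degenerate-coincidence cases: Lemma~\ref{lem:x}(a) requires $P,Q,\konj P,\konj Q$ pairwise distinct, and Schroeter's construction as stated allows $P,\konj P$ and $Q,\konj Q$ to be the \emph{same} pair, or to share a point, or to produce $S=\konj S$, or to produce a point already constructed. I would handle the case $\{P,\konj P\}=\{Q,\konj Q\}$ separately: then $S=PQ\wedge\konj P\konj Q$ collapses, and one uses instead part (b) of Lemma~\ref{lem:x}, or simply notes the construction produces nothing new, as remarked after the statement of Schroeter's Construction. The hypothesis that no four of $A,\konj A,B,\konj B,C,\konj C$ are collinear and that the three pairs are not opposite vertices of a common complete quadrilateral is exactly what guarantees $D,E,F$ are genuine distinct points and $\C$ is a genuine (possibly reducible, but that is fine) cubic; I would invoke it to ensure the initial nine points are in ``general enough'' position for Corollary~\ref{cor:consequence} to apply.

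Finally I would close the induction: the set $X$ of all pairs obtainable from $A,\konj A$, $B,\konj B$, $C,\konj C$ by iterating the Schroeter operation consists entirely of admissible pairs on $\C$ — the base case is the first paragraph above, and the inductive step is the second paragraph. Hence every point ever constructed lies on the single cubic $\C$, which is what the theorem asserts. I would end with the remark (foreshadowed in the introduction, and relevant to the sequel) that because every constructed pair $P,\konj P$ satisfies $T_P\in\C[2]$ and $\C[2]$ is finite, the ``slopes'' $T_P$ take only finitely many values, which is the mechanism by which the construction can close up after finitely many points rather than generating a dense subset — correcting Schroeter's erroneous claim.
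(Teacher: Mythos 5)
There is a genuine gap in your inductive step, and it is precisely the point the paper's proof is built around. Your invariant is that each pair $P,\konj P$ is ``admissible'', i.e.\ $T_P:=\konj P-P$ is $2$-torsion. That is too weak: for $S:=PQ\wedge\konj P\konj Q$ to lie on $\C$ you need the two lines $PQ$ and $\konj P\konj Q$ to meet $\C$ in a \emph{common} third point, i.e.\ $\dritt PQ=\dritt{\konj P}{\konj Q}$, i.e.\ $P+Q=\konj P+\konj Q$, i.e.\ $T_P=T_Q$ (using that both are $2$-torsion). If $T_P\neq T_Q$ (possible when $\C$ has full $2$-torsion), the two lines have distinct third intersection points with $\C$, and since two distinct lines meet in exactly one point, that point $S$ lies on $\C$ only in the coincidence $\dritt PQ=\dritt{\konj P}{\konj Q}$ — so your parenthetical patch (``the third intersection of $\konj P\konj Q$ with $\C$ shifted appropriately'', a point of the form $\dritt{\nO}{(P+Q+\text{correction})}$ lying on both lines) cannot be carried out: no such correction exists in general, and the sentence as written proves nothing. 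The missing idea is to strengthen the invariant from ``$T_P$ has order $2$'' to ``$\konj P-P=T$ for one fixed $2$-torsion point $T$, the same for all pairs''. This is property~(e) of the paper's ``good set''; it is established for the starting pairs not by Lemma\;\ref{lem:x}(a) (which only gives admissibility) but from the hypothesis that $D,E\in\C$, i.e.\ $\dritt AB=\dritt{\konj A}{\konj B}$ forces $T_A=T_B$, etc., and it is then propagated to each new pair $S,\konj S$ via $Q=\dritt PS=\dritt{\konj P}{\konj S}$, giving $T_S=T$. With that invariant the equality $P+Q=\konj P+\konj Q$ is automatic, $S=\dritt PQ\in\C$, and $\konj S\in\C$ follows from Lemma\;\ref{lem:x}(b), exactly as in the paper.

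Two smaller points: your base case never checks $T_A=T_B=T_C$, which, as noted, is where the points $D,E,F$ (and not just admissibility of the pairs) enter; and your closing remark misattributes the possible finiteness of the construction to the finiteness of the $2$-torsion — the paper's mechanism is that the starting points may lie in a finite subgroup of $(\C,\nO,+)$, in which case the construction closes up, while the translation $T$ is the same single point for all pairs anyway.
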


\begin{proof}
Let $A,\konj{A},B,\konj{B},C,\konj{C}$ be six pairwise distinct points 
in a plane such that no four points are collinear
and none of the pairs of conjugate points $A,\konj{A}$, $B,\konj{B}$,
$C,\konj{C}$ is a pair of opposite vertices of the same complete quadrilateral.
Furthermore, let $D,\konj{D},E,\konj{E},F,\konj{F}$
be as in Proposition\;\ref{prp:Chasles89}, and let $\C$ be the cubic
curve which passes through all of these 12 points. Finally, let 
$\nO$ a fixed point of inflection of~$\C$, and let $T_A:=\konj{A}-A$,
$T_B:=\konj{B}-B$, and $T_C:=\konj{C}-C$ be three points of order~$2$. 
First we show that $T_A=T_B$. 
Since $\dritt AB=\dritt{\konj{A}}{\konj{B}}$ we have 
$-(A+B)=-(A+T_A+B+T_B)$, which implies that $T_A=T_B$. 
With a similar argument we obtain $T_B=T_C$. Thus,  we have
$T_A=T_B=T_C=:T$. 

We will say that a set $M$ of points is a {\em good set\/}, if
\begin{enumerate}[label=(\alph*)]
\item all points of $M$ belong to $\C$,\label{good0}
\item the points $A,\konj A,B,\konj B, C,\konj C,D,\konj D,E,\konj E, F,\konj F$ 
belong to $M$,\label{good1}
\item if the pair of points $S,\konj S$ belongs to $M$, then 
$S=\dritt PQ=\dritt{\konj P}{\konj Q}$ and $\konj S=\dritt P{\konj Q}=\dritt{\konj P}Q$
for two pairs $P,\konj P$ and $Q,\konj Q$ in $M$,\label{good2}
\item for all pairs $P,\konj P$ of $M$, we have $\dritt PP=\dritt{\konj P}{\konj P}$, and\label{good3}
\item for all pairs $P,\konj P$ of $M$, we have $\konj P-P=T$.\label{good4}
\end{enumerate}
Observe first, that $\{A,\konj A,B,\konj B, C,\konj C,D,\konj D,E,\konj E, F,\konj F\}$ 
is a good set.
Indeed, \ref{good0} and \ref{good1} are trivially satisfied. The property~\ref{good2} is
clear for $D,\konj D,E,\konj E, F,\konj F$. For $A$ and $\konj A$ we have
$A=\dritt BD=\dritt{\konj B}{\konj D}$, 
$\konj A=\dritt B{\konj D}=\dritt{\konj B}D$, and similarly
for the pairs $B,\konj B$ and $C,\konj C$. The property~\ref{good3} 
follows directly from Lemma\;\ref{lem:x}(a).
Finally, we have property~\ref{good4}  already for 
$A,\konj A$, $B,\konj B$ and $C,\konj C$. For 
$D$ the argument is similar: Let $T_D:=\konj D-D$.  $T_D$ is a point of order $2$
and from $B=\dritt AD=\dritt{\konj A}{\konj D}$ it follows 
$A+D=\konj A+\konj D=A+T+D+T_D$ and hence
$T_D=T$. The analogous argument shows that $\konj E-E=\konj F-F=T$.

Now suppose that $M$ is a good set, and take two pairs $P,\konj P$ and $Q,\konj Q$ in $M$.
Let $S=PQ\wedge \konj P\konj Q$ and $\konj S=P\konj Q\wedge \konj PQ$. Then we claim that
$M\cup\{S,\konj S\}$ is also a good set. We first  show that 
$\dritt PQ=\dritt{\konj P}{\konj Q}$ or equivalently that 
$P+Q=\konj P+\konj Q$. This is equivalent to
$T=\konj P-P= Q-\konj Q=T$ which is true by property~\ref{good4} for $M$ and the fact that
$T$ is a point of order 2. Then $\dritt P{\konj Q}=\dritt{\konj P}Q$ follows from Lemma\;\ref{lem:x}(b).
We conclude that the set $M\cup\{S,\konj S\}$ has the properties~\ref{good0} and~\ref{good2}.
Property~\ref{good1} is trivial. For property~\ref{good3} we need to see that $\dritt SS=\dritt{\konj S}{\konj S}$, which follows from
Lemma\;\ref{lem:x}(a). For property~\ref{good4} we define $T_S=\konj S-S$. $T_S$ is a point of order $2$.
From $Q=\dritt PS=\dritt{\konj P}{\konj S}$ it follows $P+S=\konj P+\konj S=P+T+S+T_S$ and hence
$T_S=T$. This shows that $M\cup\{S,\konj S\}$ has all properties of a good set.

It follows that all points we obtain by Schroeter's construction belong
to the same curve~$\C$.
\end{proof}

The above proof shows that the Schroeter points have the following additional properties
\begin{itemize}
\item If $P,\konj P$ is a pair of Schroeter points on $\C$, then the tangents in 
$P$ and $\konj P$ meet on~$\C$.
\item With respect to a chosen point $\nO$ of inflection, we have that $\konj P-P=T$ is 
a point of order 2 on $\C$ which is the same for all Schroeter pairs $P,\konj P$.
\end{itemize}
The following result shows that we can construct the tangent to $\C$ in each 
Schroeter point by a line involution (hence with ruler alone).
\begin{prp}\label{prp:tangent}
Let\/ $\C$ be the cubic from Proposition\;\ref{prp:Chasles89}.
Assume that\/ $S,\konj S$, $P,\konj P$, $Q,\konj Q$ are three of the pairs\/ 
$A,\konj A$, $B,\konj B$, $C,\konj C$, $D,\konj D$, $E,\konj E$, $F,\konj F$
or of the pairs which are constructed by Schroeter's construction, 
such that\/ $SP$, $SQ$, $S\konj P$, $S\konj Q$ are four distinct lines.
Let\/ $s=S\konj S$ and\/ $\konj s$ its conjugate line with respect to 
the involution given by the lines\/ $SP$, $SQ$, $S\konj P$, $S\konj Q$.
Then\/ $\konj s$ is tangent to\/ $\C$ in\/ $S$ (see Fig.\,\ref{fig:prop6}).
\end{prp}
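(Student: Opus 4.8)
\medskip
\noindent\textbf{Proof sketch.}
The plan is to recognise the line involution in the statement---which a priori is pinned down only by the two conjugate pairs $\{SP,S\konj P\}$ and $\{SQ,S\konj Q\}$---as a completely explicit involution coming from the group law on $\C$: ``translation by the common order-$2$ point $T$'', transported to the pencil of lines through $S$. Once that identification is made, the conjugate of $s=S\konj S$ drops out of a one-line group computation.

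First I would fix the inflection point $\nO$ from Section\;3 and recall the usual dictionary: three points of $\C$ are collinear exactly when their sum is $\nO$, so the tangent to $\C$ at $X$ meets $\C$ again in $\dritt XX=\nO-2*X$, and the line through two distinct points $X,Y\in\C$ meets $\C$ again in $\nO-X-Y$. By Theorem\;\ref{thm:main} and the remarks following it, the three pairs occurring here satisfy $\konj P-P=\konj Q-Q=\konj S-S=T$ with one and the same point $T$ of order~$2$. I would then consider the projection $\pi_S\colon\C\to\mathds{P}^1$ from $S$ onto the pencil of lines through $S$: for $X\neq S$ one has $\pi_S(X)=SX$, while $\pi_S(S)$ is the tangent $t_S$ to $\C$ at $S$. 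This is a morphism of degree~$2$ whose fibres are the point-pairs $\{X,\sigma(X)\}$ with $\sigma(X):=\nO-S-X$. Concretely, the involution we are after will be the rule that replaces a line $\ell$ through $S$, meeting $\C$ residually in $\{X,\sigma(X)\}$, by the line meeting $\C$ residually in $\{X+T,\sigma(X)+T\}$.

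The heart of the argument is to see that this rule really is a line involution, i.e. a projectivity of the pencil. Since $2*T=\nO$, so that $-T=T$, the translation $\tau_T\colon X\mapsto X+T$ of $\C$ commutes with the fibre involution $\sigma$ (a one-line check: $\sigma(\tau_T(X))=\nO-S-X-T=\nO-S-X+T=\tau_T(\sigma(X))$). Hence $\tau_T$ descends to a well-defined self-map $\Lambda$ of the pencil with $\Lambda\kringl\pi_S=\pi_S\kringl\tau_T$; as $\pi_S$ is a finite surjective morphism, $\Lambda$ is a morphism, and comparing degrees in this identity forces $\Lambda$ to have degree~$1$, so $\Lambda$ is a projective transformation of the pencil. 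Moreover $\Lambda\kringl\Lambda$ is the identity because $\tau_T\kringl\tau_T$ is. Thus $\Lambda$ is a line involution in the sense of Section\;1. This is the step I expect to be the main obstacle: one must be at ease with projection from a point of the (smooth) cubic $\C$ being a degree-$2$ morphism that extends over $S$ to the tangent line, and with the fact that an automorphism of $\C$ commuting with $\sigma$ descends to a genuine projectivity of the pencil---in short, that the combinatorial ``$+T$'' rule is a bona fide line involution.

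It then remains to match $\Lambda$ with the involution of the statement and compute. By construction $\Lambda$ carries $SP=\pi_S(P)$ to $\pi_S(P+T)=\pi_S(\konj P)=S\konj P$, and likewise $SQ$ to $S\konj Q$; since $SP,SQ,S\konj P,S\konj Q$ are four distinct lines, $\{SP,S\konj P\}$ and $\{SQ,S\konj Q\}$ are two different pairs of conjugate lines of $\Lambda$, and a line involution is determined by two such pairs (see Section\;1). Hence $\Lambda$ is precisely the involution generated by $SP,SQ,S\konj P,S\konj Q$, and therefore $\konj s=\Lambda(s)=\Lambda(\pi_S(\konj S))=\pi_S(\konj S+T)=\pi_S((S+T)+T)=\pi_S(S)=t_S$, the tangent to $\C$ at $S$, as claimed. (A purely synthetic proof via the Chasles-type construction of Section\;1 also seems possible, but the obvious choices of auxiliary points there run into degeneracies, so the elliptic-curve route appears the more transparent one.)
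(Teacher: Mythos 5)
Your argument is correct in substance and reaches the conclusion by a genuinely different route than the paper. The paper first proves, by a direct computation in the normal form $\Cabc$ (Lemma\;\ref{lem:involution}, exhibiting the center of the involution on the line $x=0$ with constant product of distances), that for any point $R$ on $\C$ the map $RX\mapsto R\konj X$ is a line involution of the pencil at $R$; it restates this as the Fact illustrated in Fig.\,\ref{fig:fact9} and then, in the proof of Proposition\;\ref{prp:tangent}, must treat the vertex being $S$ itself by a case split: if $s=S\konj S$ meets $\C$ in a third point $U$, an auxiliary involution centred at $\konj U$ pins $\konj s$ down as the tangent, and if not, a limiting argument with points $P_n\to S$ and continuity of the group law is used. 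You instead construct the involution on the pencil at $S$ directly, as the descent of the translation $\tau_T$ through the degree-$2$ projection $\pi_S$, whose extension over $S$ sends $S$ to the tangent; your identification of this map with the involution of the statement via the two pairs $(SP,S\konj P)$ and $(SQ,S\konj Q)$ is legitimate (the paper itself notes that a line involution is determined by two conjugate pairs), you correctly use, as the paper does via Theorem\;\ref{thm:main}, that all Schroeter pairs differ by the same $2$-torsion point $T$, and the final computation $\konj s=\Lambda(s)=\pi_S(\konj S+T)=\pi_S(S)=t_S$ is right. What your route buys is uniformity: tangency at $S$ is exactly the morphism extension $\pi_S(S)=t_S$, so no case analysis and no limit argument are needed. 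What it costs is the machinery you flag (the degree-$2$ quotient by the fibre involution $\sigma$, descent of the commuting automorphism, the degree count), plus one small point deserving a sentence in a full write-up: over the reals $\pi_S$ is not surjective onto the pencil, since a line through $S$ may meet $\C$ residually in a complex-conjugate pair, so you should either perform the descent over $\mathds{C}$ and note that $\Lambda$ is defined over $\R$, or argue directly that the descended map is a projectivity of the real pencil; the paper's explicit coordinate computation sidesteps this. Both proofs tacitly assume $\C$ non-singular so that the group law is available, as the paper does throughout.
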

\begin{figure}[h!]
\begin{center}
\includegraphics{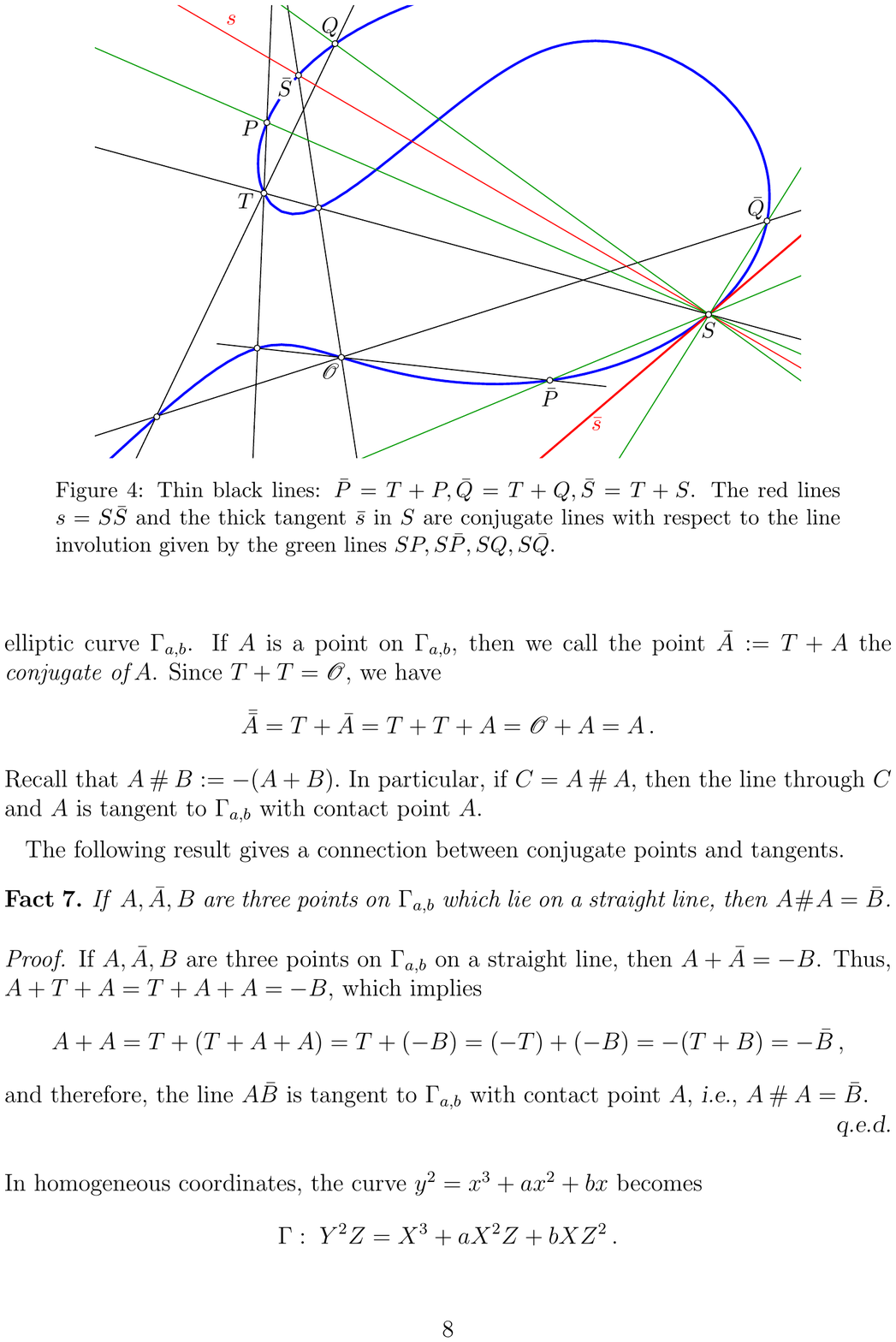}
\caption{Thin black lines: $\konj P=T+P, \konj Q=T+Q, \konj S=T+S$. The red lines
$s=S\konj S$ and the thick tangent $\konj s$ in $S$ are conjugate lines with respect to the line involution
given by the green lines $SP,S\konj P,SQ,S\konj Q$.}\label{fig:prop6}
\end{center}
\end{figure}

Before we can prove Proposition\;\ref{prp:tangent}, we have to recall 
a few facts about cubic curves.
It is well-known that every cubic curve can be transformed into 
Weierstrass Normal Form
$$
\Cab:\;
y^2=x^3+ax^2+bx$$ with $a,b,c\in\R$.
In the real projective plane, $\nO=(0,1,0)$ is a point inflection of $\Cab$
and $T_{a,b}=(0,0,1)$ is a point of order~$2$ of $\Cab$, where
$\nO$ is the neutral element of the elliptic curve~$\Cab$. 
If $A$ is a point on~$\Cab$, then we call
the point $\konj{A}:=T+A$ the {\em conjugate of\/
$A$}.
Since $T+T=\nO$, we have $$\konj{\konj{A}}=T+\konj{A}=T+T+A=\nO+A=A\,.$$
Recall that $\dritt AB:=-(A+B).$
In particular, if $C=\dritt AA$, then the line through $C$ 
and $A$ is tangent to $\Cab$ with contact point~$A$.

The following result gives a connection between conjugate points and
tangents.

\begin{fct}\label{fct:tangent}
If\/ $A,\konj{A},B$ are three points on\/ $\Cab$ which lie on a straight
line, then\/ $\dritt AA=\konj{B}$.
\end{fct}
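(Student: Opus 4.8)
The plan is to reduce the statement to a short identity in the elliptic curve group $(\Cab,\nO,+)$, converting the geometric hypothesis into the group law and then finishing by pure abelian-group algebra.

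First I would translate the hypothesis. Since $A,\konj A,B$ are collinear points of $\Cab$, the point $B$ is the third intersection of their common line with the curve, {\ie} $\dritt A{\konj A}=B$. By the definition of the addition we have $A+\konj A=-(\dritt A{\konj A})=-B$; adding $B$ and using $-B+B=\nO$ then gives
$$A+\konj A+B=\nO.$$
Next I would insert the definition of the conjugate, $\konj A=T+A$, so that $A+(T+A)+B=\nO$, that is $2*A+T+B=\nO$, and hence $B=-(2*A)-T$. Since $\konj B=T+B$ and the group is abelian, the two occurrences of $T$ cancel and we obtain $\konj B=-(2*A)$. Finally $\dritt AA=-(A+A)=-(2*A)$ by the definition of $\dritt{\cdot}{\cdot}$, so $\konj B=\dritt AA$, which is exactly the assertion.

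The computation is entirely routine; the only place where anything beyond the abelian-group axioms enters is the opening translation, namely the claim that three collinear points of the curve sum to the neutral element. This rests on the identity $-B+B=\nO$, which in turn relies on $\nO=(0,1,0)$ being a point of inflection of $\Cab$ (for a non-inflection base point three collinear points would instead sum to $-\nO$). As the Fact is stated precisely for the normal form $\Cab$ with its distinguished inflection point $\nO$, this holds automatically, and I expect no genuine obstacle in writing the argument out in detail.
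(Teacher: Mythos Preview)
Your argument is correct and follows essentially the same route as the paper's proof: translate collinearity into $A+\konj A+B=\nO$, substitute $\konj A=T+A$, and solve for $\konj B=-(2*A)=\dritt AA$. The only cosmetic difference is that the paper cancels $T$ via $T+T=\nO$ whereas you cancel $T-T$; both are immediate.
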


\begin{proof} If $A,\konj{A},B$ are three points on $\Cab$ on a straight
line, then $A+\konj{A}=-B$. Thus, $A+T+A=T+A+A=-B$, which implies
$$A+A=T+(T+A+A)=T+(-B)=(-T)+(-B)=-(T+B)=-\konj{B}\,,$$ and therefore,
the line $A\konj{B}$ is tangent to $\Cab$ with contact point~$A$, {\ie},
$\dritt AA=\konj{B}$.
\end{proof}

\noindent In homogeneous coordinates, the curve $y^2=x^3+ax^2+bx$ becomes
$$\C:\;Y^2 Z=X^3+aX^2 Z+bX Z^2\,.$$ 
Assume now that $\tilde{A}=\bigl(r_0,r_1,1\bigr)$
is a point on the cubic $\C$, where $r_0,r_1\in\R\setminus\{0\}$. 
Then the point $(1,1,1)$ is on the curve
$$r_1^2Y^2 Z=r_0^3X^3+a r_0^2 X^2 Z+
b r_0 X Z^2\,.$$ Now, by exchanging $X$ and $Z$ ({\ie},
$(X,Y,Z)\mapsto (Z,Y,X)$), dehomogenizing with respect to the third 
coordinate ({\ie}, $(Z,Y,X)\mapsto (\frac ZX,\frac YX,1)$), and multiplying
with $\frac{1}{r_1^2}$, we obtain that the point $A=(1,1)$ is on the 
curve $$\Cabc:\;y^2 x=\alpha+\beta x+\gamma x^2\,,$$ where
$$\alpha=\frac{r_0^3}{{\mathstrut}^{\mathstrut}r_1^2}\,,\qquad
\beta=a\cdot\frac{r_0^2}{{\mathstrut}^{\mathstrut}r_1^2}\,,\qquad
\gamma=b\cdot\frac{r_0}{{\mathstrut}^{\mathstrut}r_1^2}\,.$$
Notice that since $A=(1,1)$ is on $\Cabc$, we have $\alpha+\beta+\gamma=1$. 

The next result gives a connection between line involutions
and conjugate points.

\begin{lem}\label{lem:involution}
Let\/ $A=(x_0,y_0)$ be an arbitrary but fixed point on\/ $\Cabc$. 
For every point\/ $P$ on\/ $\Cabc$ which is different from\/ $A$ and\/ $\konj{A}$,
let\/ $g:=AP$ and\/ $\konj{g}:=A\konj{P}$. Then the mapping\/ $\Inv{A}:g\mapsto\konj{g}$
is a line involution.
\end{lem}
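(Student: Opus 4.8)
The plan is to verify the two defining properties of a line involution for $\Inv A$: first, that $\Inv A$ is a well-defined involution of the pencil of lines through $A$; second, that it preserves the cross-ratio of four lines of that pencil. The first property is soft and uses only the group $(\Cabc,\nO,+)$. For the second I will express $\Inv A$ in the slope coordinate on the pencil and show it is fractional-linear; since the cross-ratio of four lines of a pencil equals the cross-ratio of their slopes and fractional-linear maps preserve cross-ratios, this yields the second property (indeed more: $\Inv A$ preserves \emph{all} cross-ratios of four lines).

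For the first property: since $T$ has order $2$, $\konj{\konj P}=T+(T+P)=P$, and $\konj P\ne A$ (else $P=\konj A$, which is excluded), so $A\konj P$ is always a genuine line through $A$. If $P,P'$ are two points of $\Cabc$ on one and the same line through $A$, then $\dritt{P}{P'}=A$, {\ie}, $P+P'=-A$ in the group, hence $\konj P+\konj{P'}=2T+P+P'=-A$, so $A,\konj P,\konj{P'}$ are collinear as well. Therefore the line $\konj g:=A\konj P$ depends only on $g=AP$ and not on the point $P$ of $g$ chosen, so $\Inv A$ is a well-defined self-map of the pencil, and $\Inv A(\konj g)=A\konj{\konj P}=AP=g$ shows that it is an involution.

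For the second property I would first record conjugation in coordinates on $\Cabc$, where the inflection point $\nO$ is the point at infinity in the vertical direction and $T$ the point at infinity in the horizontal direction: then $-Q=(x_Q,-y_Q)$, $\dritt TQ=\bigl(\tfrac{\alpha}{\gamma x_Q},y_Q\bigr)$, and hence $\konj Q=T+Q=-\bigl(\dritt TQ\bigr)=\bigl(\tfrac{\alpha}{\gamma x_Q},-y_Q\bigr)$ for $Q=(x_Q,y_Q)$ on $\Cabc$. Now parametrise the pencil through $A=(x_0,y_0)$ by the slope $m\in\R\cup\{\infty\}$ of the line. Substituting $y=y_0+m(x-x_0)$ into $y^2x=\alpha+\beta x+\gamma x^2$ gives a cubic in $x$ with roots $x_0$ (coming from $A$) and the abscissae $x_1,x_2$ of the two further intersection points of $g$ with $\Cabc$; Vieta's formulas give $x_1x_2=\alpha/(x_0m^2)$, together with a similar expression for $x_1+x_2$. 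The conjugate line $\konj g$ runs through $A$ and through the points with abscissae $\tfrac{\alpha}{\gamma x_1},\tfrac{\alpha}{\gamma x_2}$; applying the same two Vieta relations to $\konj g$ with slope $\konj m$ and comparing with those for $g$ forces $\konj m=\gamma/(x_0m)$ — the product relation gives $\konj m=\pm\gamma/(x_0m)$ and the sum relation fixes the sign. Hence, in the slope coordinate, $\Inv A$ is the fractional-linear involution $m\mapsto\gamma/(x_0m)$, so $\Inv A$ is a line involution.

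The genuine content is this last computation, {\ie}, that $\Inv A$ is fractional-linear; there is no shortcut via a claim that $\Inv A$ is the restriction of a plane projectivity, because translation by the $2$-torsion point $T$ is not induced by a projective transformation of $\rpp$ (only translations by $3$-torsion points are). Everything else is bookkeeping with the group law. One small caveat: over $\R$ a line through $A$ need not meet $\Cabc$ in further real points, but the formula $m\mapsto\gamma/(x_0m)$ defines the involution on the whole pencil and agrees with $g\mapsto\konj g$ wherever the latter makes sense (alternatively, one may argue over the complex numbers throughout).
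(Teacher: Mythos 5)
Your proof is correct, and in substance it lands on the same identity as the paper, but it gets there by a somewhat different route, so a comparison is worth making. The paper also works in the coordinates of $\Cabc$ with the conjugation formula $\konj P=\bigl(\tfrac{\alpha}{\gamma x_1},-y_1\bigr)$, but it invokes the classical center-of-involution criterion: it exhibits the point $\zeta_0=(0,y_0)$ on the transversal $x=0$ and checks directly, from the slopes of $g=AP$ and $\konj g=A\konj P$, that the product of the signed distances cut on that transversal is the constant $\gamma x_0$. Since those distances are $-x_0\lambda_P$ and $-x_0\lambda_{\konj P}$, the paper's constant-product relation is literally your relation $\lambda_P\lambda_{\konj P}=\gamma/x_0$, i.e.\ your M\"obius involution $m\mapsto\gamma/(x_0m)$; the difference is only in how it is derived (direct computation of both slopes and use of $y_i^2=\tfrac{\alpha}{x_i}+\beta+\gamma x_i$, versus your detour through Vieta's formulas on the intersection cubic plus a sign-fixing step) and in how the conclusion is phrased (center-of-involution criterion versus cross-ratio invariance of fractional-linear maps in the slope coordinate). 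Your version buys two things the paper leaves implicit: the well-definedness of $g\mapsto\konj g$ when $g$ meets the curve in two admissible points, argued cleanly via the group law, and the explicit statement that $\Inv A$ preserves all cross-ratios. Two small caveats on your derivation: the Vieta product relation needs $m\neq 0,\infty$ (horizontal and vertical lines through $A$ pass through $T$ resp.\ $\nO$ and must be treated as the limiting pair $0\leftrightarrow\infty$, which your formula does handle), and your sign-fixing via the sum relation becomes inconclusive exactly when $y_0=0$ (i.e.\ when $A$ is itself a $2$-torsion point); in that case the direct slope computation as in the paper still gives the $+$ sign, so the statement is unaffected, but your argument as written does not settle it there.
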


\begin{proof} It is enough to show that there exists a point $\zeta_0$ 
(called the center of the involution) on
the line $h:x=0$, such that the 
product of the distances between $\zeta_0$ and the intersections of $g$
and $\konj{g}$ with~$h$ is constant. 
\begin{figure}[h!]
\begin{center}
\includegraphics{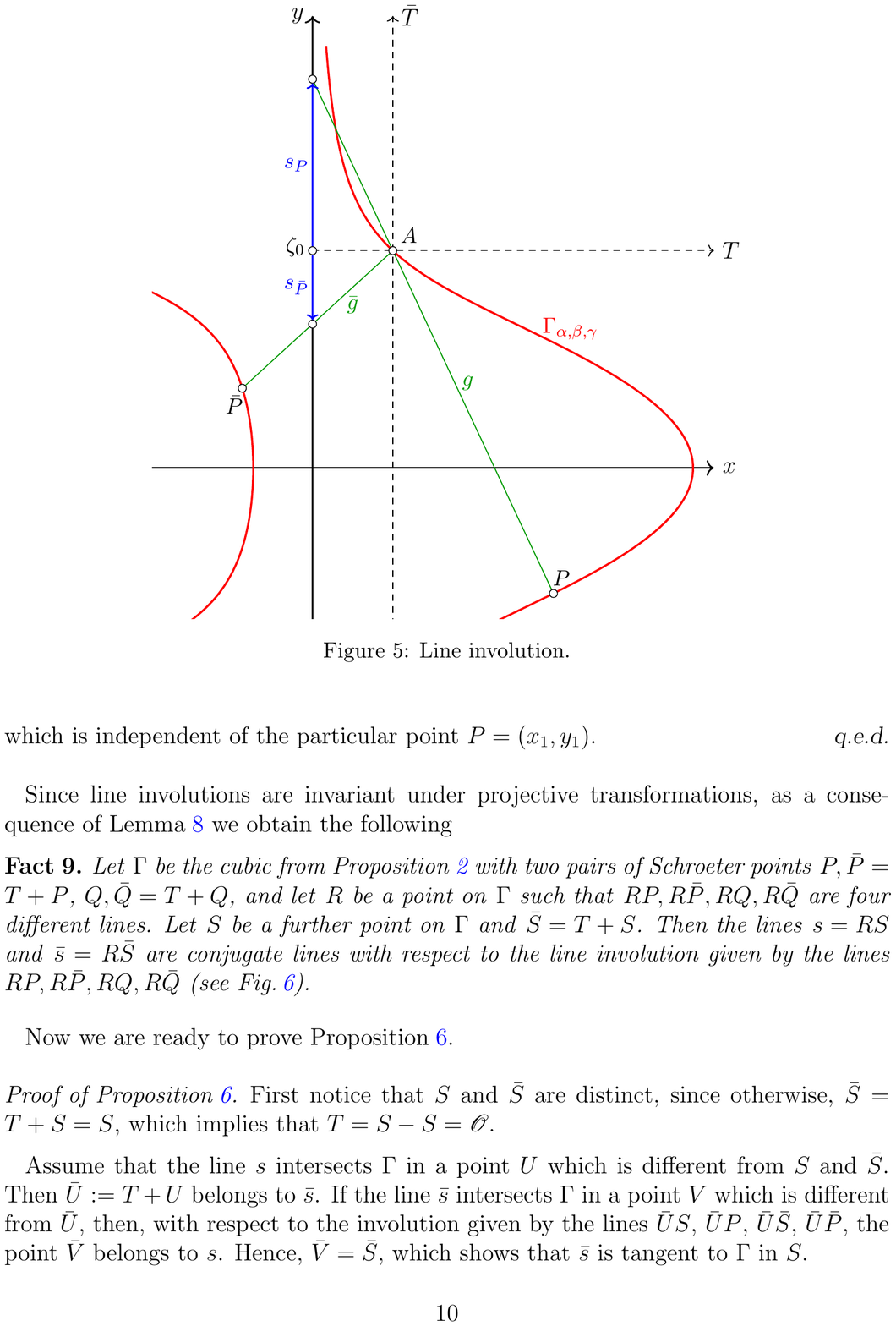}
\caption{Line involution.}\label{fig:involution}
\end{center}
\end{figure}

Since $\konj{T}=T+T=\nO$, with respect to
$T$ we have $g: y=y_0$ and $\konj{g}:x=x_0$, which implies that 
$\zeta_0=(0,y_0)$. Now, let $P=(x_1,y_1)$ be a point on $\Cabc$ which is 
different from $A,\konj{A},T,\nO$, and let $g:=AP$ and $\konj{g}:=A\konj{P}$.
Since $\konj{P}=\bigl(\frac{\alpha}{\gamma x_1},-y_1\bigr)$, the slopes 
$\lambda_P$ and $\lambda_{\konj{P}}$ of $g$ and $\konj{g}$, respectively, are
$$\lambda_P=\frac{y_1-y_0}{x_1-x_0}
\qquad\text{and}\qquad
\lambda_{\konj{P}}=\frac{-y_1-y_0}{\frac{\alpha}{\gamma x_1}-x_0}\,.$$
Thus, the distances $s_P$ and $s_{\konj{P}}$ between $\zeta_0$ 
and the intersections of $g$ and $\konj{g}$ with~$h$, respectively, are
$$s_P=-\frac{x_0(y_1-y_0)}{x_1-x_0}
\qquad\text{and}\qquad
s_{\konj{P}}=\frac{x_0(y_1+y_0)\cdot\gamma x_1}{\alpha-\gamma x_1 x_0}\,.$$
Now, $$s_P\cdot s_{\konj{P}}=
-\frac{x_0^2(y_1-y_0)(y_1+y_0)\gamma x_1}{(x_1-x_0)(\alpha-\gamma x_0 x_1)}=
-\frac{x_0^2(y_1^2-y_0^2)\gamma x_1}{(x_1-x_0)(\alpha-\gamma x_0 x_1)}\,,$$
and using the fact that for $i\in\{0,1\}$, $y_i^2=\frac{\alpha}{x_i}+\beta+\gamma x_i$,
we obtain $$s_P\cdot s_{\konj{P}}=\gamma\cdot x_0\,,$$
which is independent of the particular point $P=(x_1,y_1)$.
\end{proof}

Since line involutions are invariant under projective transformations,
as a consequence of Lemma\;\ref{lem:involution} we obtain the following 

\begin{fct} 
Let\/ $\C$ be the cubic from Proposition\;\ref{prp:Chasles89}
with two pairs of Schroeter points\/ $P,\konj P=T+P$, $Q,\konj Q=T+Q$,
and let\/ $R$ be a point on\/ $\C$ such that\/ $RP,R\konj P,RQ,R\konj Q$ are
four different lines. Let\/ $S$ be a further point on\/ $\C$ and\/ $\konj S=T+S$.
Then the lines\/ $s=RS$ and\/ $\konj s=R\konj S$ are conjugate lines with respect
to the line involution given by the lines\/ $RP,R\konj P,RQ,R\konj Q$ 
(see Fig.\,\ref{fig:fact9}).
\end{fct}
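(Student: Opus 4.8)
The plan is to implement the reduction announced just before the statement: transport the whole configuration to the normal form $\Cabc$ by a projective transformation and then invoke Lemma~\ref{lem:involution}. First I would normalize $\C$ as in the paragraph preceding that lemma. Sending a real point of inflection of $\C$ to $(0,1,0)$ puts $\C$ in a form $y^2=x^3+px^2+qx+r$, and the affine translation $x\mapsto x-\xi$, with $(\xi,0)$ the $2$-torsion point that $T$ becomes, yields $\C\colon y^2=x^3+ax^2+bx$ with $\nO=(0,1,0)$ and $T=(0,0,1)$. Performing on this the coordinate change described there — a scaling fixed by some point $(r_0,r_1,1)\in\C$ with $r_0,r_1\ne0$, the swap $(X,Y,Z)\mapsto(Z,Y,X)$, and a dehomogenization — produces a projective map $\phi$ with $\phi(\C)=\Cabc$ for suitable $\alpha,\beta,\gamma$, with $\phi(\nO)$ the inflection point $(0,1,0)$ of $\Cabc$, and with $\phi(T)$ the point $(1,0,0)$ of order $2$ for which conjugation on $\Cabc$, in the sense of Lemma~\ref{lem:involution}, is ``$+\phi(T)$''. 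For all but finitely many positions of $R$ the point $A:=\phi(R)$ is an affine point of $\Cabc$, so Lemma~\ref{lem:involution} applies with base point $A$.

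Next I would transport the conjugation. For any $X$ on $\C$ one has $\konj X=T+X=\nO\mathbin{\#}(T\mathbin{\#}X)$, an expression built from $\nO$, $T$, lines, and ``third point of intersection with the curve'', each of which a projective map respects; hence $\phi(\konj X)=(0,1,0)\mathbin{\#}\bigl((1,0,0)\mathbin{\#}\phi(X)\bigr)$ is precisely the conjugate of $\phi(X)$ on $\Cabc$. By Lemma~\ref{lem:involution}, the map $\Inv A\colon A\,\phi(X)\mapsto A\,\overline{\phi(X)}$ is a line involution on the pencil at $A$; since $\phi^{-1}$ is projective and line involutions are invariant under projective transformations, $\Lambda:=\phi^{-1}\circ\Inv A\circ\phi$ is a line involution on the pencil at $R$, and by the displayed identity $\Lambda$ sends $RX$ to $R\konj X$ for every $X$ on $\C$ with $R\notin\{X,\konj X\}$. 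Using the hypotheses $\konj P=T+P$, $\konj Q=T+Q$ and the definition $\konj S=T+S$, the involution $\Lambda$ thus contains the three pairs of conjugate lines $(RP,R\konj P)$, $(RQ,R\konj Q)$ and $(RS,R\konj S)$.

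To finish, recall that a line involution is uniquely determined by any two of its pairs of conjugate lines once the four lines involved are distinct. By hypothesis $RP,R\konj P,RQ,R\konj Q$ are four distinct lines, so $\Lambda$ coincides with the line involution ``given by $RP,R\konj P,RQ,R\konj Q$'' named in the statement; since $(RS,R\konj S)$ is one of its pairs, $s=RS$ and $\konj s=R\konj S$ are indeed conjugate with respect to it. The only part demanding real care is the normalization in the first paragraph — carrying it out while keeping track of the images of $\nO$, $T$ and $R$, and handling the few exceptional positions of $R$ (for instance $R=T$); for those one can observe that ``$s,\konj s$ are conjugate with respect to the involution through four given lines'' is a closed condition on $R$ and appeal to the generic case. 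Everything after that is purely formal.
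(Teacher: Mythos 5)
Your argument is exactly the paper's intended derivation: the paper states this Fact with no separate proof beyond the remark that it follows from Lemma~\ref{lem:involution} because line involutions are invariant under projective transformations, and your proposal simply fills in that reduction (normalization to $\Cabc$, transport of the conjugation $X\mapsto T+X$, and uniqueness of the involution from two pairs of distinct lines). So the proposal is correct and takes essentially the same route, just written out in more detail than the paper bothers to.
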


\begin{figure}[h!]
\begin{center}
\includegraphics{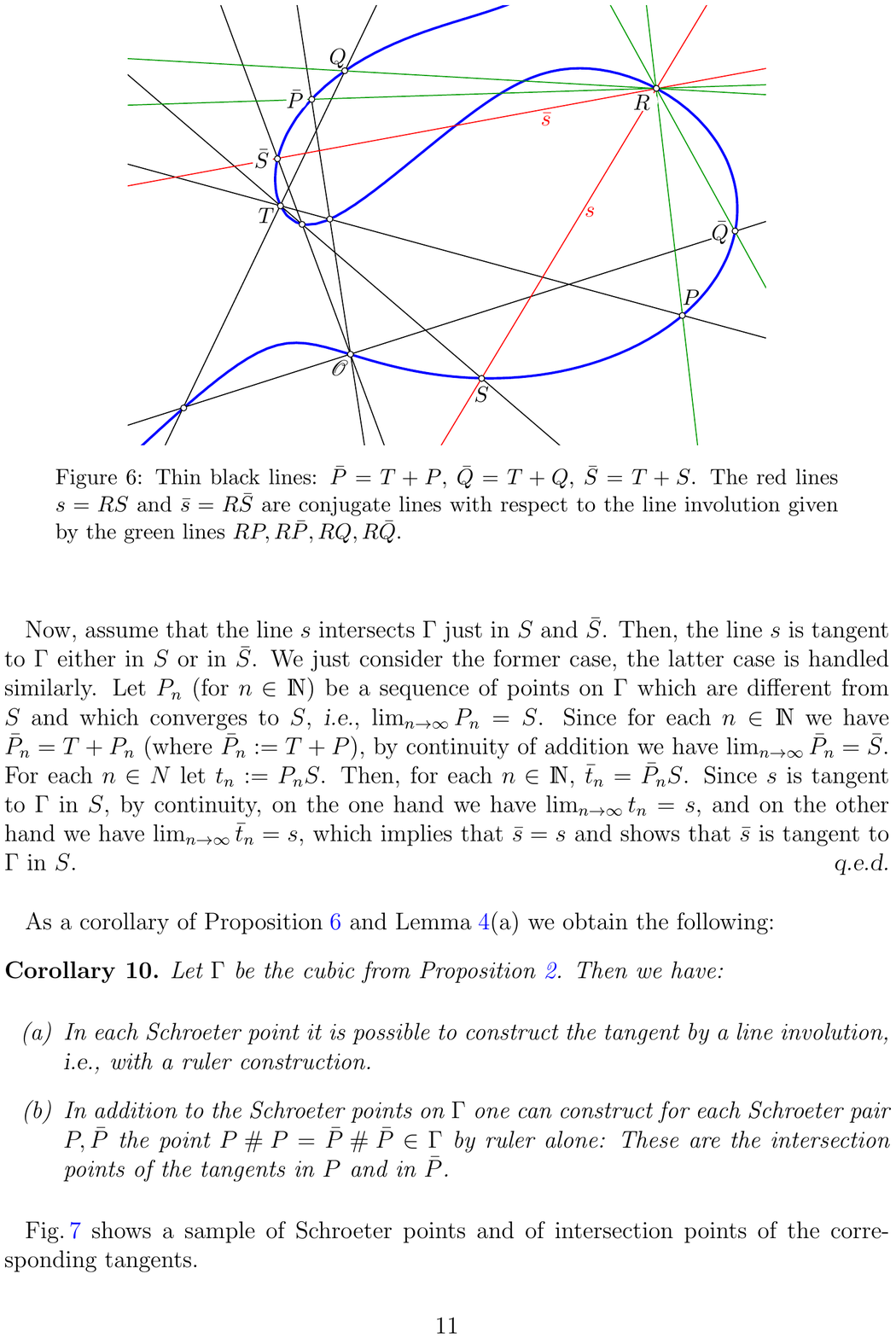}
\caption{Thin black lines: $\konj P=T+P$, $\konj Q=T+Q$, $\konj S=T+S$.
The red lines $s=RS$ and $\konj s=R\konj S$ are conjugate lines with respect
to the line involution given by the green lines $RP,R\konj P,RQ,R\konj Q$.}\label{fig:fact9}
\end{center}
\end{figure}

Now we are ready to prove Proposition\;\ref{prp:tangent}.

\begin{proof}[Proof of Proposition\;\ref{prp:tangent}] 
First notice that $S$ and $\konj{S}$ are distinct,
since otherwise, $\konj{S}=T+S=S$, which implies that $T=S-S=\nO$.

Assume that the line $s$ intersects $\C$ in a point $U$ which
is different from $S$ and $\konj S$. Then $\konj U:=T+U$ belongs to $\konj s$.
If the line $\konj s$ intersects $\C$ in a point $V$ which
is different from $\konj U$, then, with respect to the 
involution given by the lines $\konj U S$, $\konj U P$, 
$\konj U\konj S$, $\konj U\konj P$, 
the point $\konj V$ belongs to~$s$. Hence, 
$\konj V=\konj S$, which shows that $\konj s$ 
is tangent to $\C$ in $S$. 

Now, assume that the line $s$ intersects $\C$ just in $S$ and $\konj S$.
Then, the line $s$ is tangent to $\C$ either in $S$ or in $\konj S$. 
We just consider the former case, the latter case is handled similarly.
Let $P_n$ (for $n\in\N$) be a sequence of points on $\C$ which are different
from $S$ and which converges to~$S$, {\ie}, $\lim_{n\to\infty}P_n=S$.
Since for each $n\in\N$ we have $\konj{P}_n=T+P_n$ (where $\konj{P}_n:=T+P$), 
by continuity of addition 
we have $\lim_{n\to\infty}\konj{P}_n=\konj S$. 
For each $n\in N$ let $t_n:=P_nS$. 
Then, for each $n\in\N$, $\konj{t}_n=\konj{P}_nS$. 
Since $s$ is tangent to $\C$ in $S$,
by continuity, on the one hand we have $\lim_{n\to\infty}t_n=s$, and
on the other hand we have $\lim_{n\to\infty}\konj{t}_n=s$, 
which implies that $\konj s=s$ and shows that $\konj s$ 
is tangent to $\C$ in~$S$. 
\end{proof}

As a corollary of Proposition\;\ref{prp:tangent} 
and
Lemma\;\ref{lem:x}(a) we obtain 
the following:
\begin{cor} Let $\C$ be the cubic from Proposition~\ref{prp:Chasles89}. Then we have:
\begin{enumerate}[label=(\alph*)]
\item In each Schroeter point it is possible to construct the tangent by a line involution, {\ie}, with a ruler construction.
\item In addition to the Schroeter points on $\C$ one can construct for each Schroeter pair 
$P,\konj P$ the point $\dritt PP=\dritt{\konj P}{\konj P}\in\C$ by ruler alone:
These are  the intersection points of the tangents in $P$ and in $\konj P$.
\end{enumerate}
\end{cor}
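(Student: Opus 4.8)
The plan is to deduce both parts directly from machinery already in place: part~(a) from Proposition~\ref{prp:tangent} together with the remark in Section~1 (following Chasles' construction of conjugate lines) that a line involution can be realized with a ruler alone; and part~(b) from part~(a), Lemma~\ref{lem:x}(a), and the elementary fact recorded before Fact~\ref{fct:tangent} that whenever $P'=\dritt PP$, the line $PP'$ is the tangent to $\C$ with contact point $P$. So this is an assembly of cited results rather than a new argument.

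For (a), fix a Schroeter point $S$, so that $S$ lies in a pair $S,\konj S$ among $A,\konj A,\dots,F,\konj F$ or among the pairs produced by Schroeter's construction. Choose two further such pairs $P,\konj P$ and $Q,\konj Q$ for which $SP,S\konj P,SQ,S\konj Q$ are four distinct lines. Then, with a ruler, draw $s=S\konj S$ and apply the construction of Section~1 to the two pairs of conjugate lines $\{SP,S\konj P\}$ and $\{SQ,S\konj Q\}$ to obtain the conjugate line $\konj s$ of $s$ in the resulting line involution. By Proposition~\ref{prp:tangent}, $\konj s$ is the tangent to $\C$ at $S$, and since every step was a ruler operation, the tangent at $S$ has been constructed by ruler alone.

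For (b), fix a Schroeter pair $P,\konj P$. As observed in the proof of Theorem~\ref{thm:main}, $P,\konj P$ occurs in some instance of Schroeter's construction either as an input pair or as the output pair $S,\konj S$; in either case Lemma~\ref{lem:x}(a) gives that the point $P':=\dritt PP=\dritt{\konj P}{\konj P}$ lies on $\C$. Now the tangent to $\C$ at $P$ is the line $P P'$ and the tangent at $\konj P$ is the line $\konj P\,P'$, so both tangents pass through $P'$; consequently the intersection of the tangent at $P$ with the tangent at $\konj P$ --- each produced by ruler in part~(a) --- is precisely the point $P'=\dritt PP=\dritt{\konj P}{\konj P}$. (Should the two tangents coincide, i.e.\ $P,\konj P,P'$ be collinear, they still meet $\C$, and each other, at $P'$, so the conclusion persists.)

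The only step that genuinely needs care is the genericity hypothesis of Proposition~\ref{prp:tangent}: one must guarantee, for every Schroeter point $S$, the existence of two further pairs $P,\konj P$, $Q,\konj Q$ with $SP,S\konj P,SQ,S\konj Q$ pairwise distinct. I expect this to be where the small amount of work lies --- one uses that at least the three pairs $A,\konj A$, $B,\konj B$, $C,\konj C$ (and all further Schroeter pairs) are available, and that the non-degeneracy hypotheses of Proposition~\ref{prp:Chasles89} forbid enough collinearities through $S$, so that a short case inspection produces a suitable choice. Everything else is a direct combination of Proposition~\ref{prp:tangent}, Lemma~\ref{lem:x}(a), and the ruler-constructibility of line involutions.
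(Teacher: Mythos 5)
Your argument is exactly the one the paper intends: it states this corollary without proof, as an immediate consequence of Proposition~\ref{prp:tangent} (plus the ruler-constructibility of line involutions from Section~1) for part~(a) and of Lemma~\ref{lem:x}(a) together with the fact that the tangent at $P$ passes through $\dritt PP$ for part~(b), which is precisely your assembly. The two details you flag or gloss over --- choosing pairs so that $SP,S\konj P,SQ,S\konj Q$ are distinct, and the (in fact impossible, since a line tangent at two distinct points of a nonsingular cubic would meet it with multiplicity at least four) case of coincident tangents --- are likewise left implicit by the paper, so your proposal matches its proof in substance.
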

Fig.\,\ref{fig:points} shows a sample of Schroeter points and of intersection points of the corresponding tangents.
\begin{figure}[h!]
\begin{center}
\includegraphics[width=.6\textwidth]{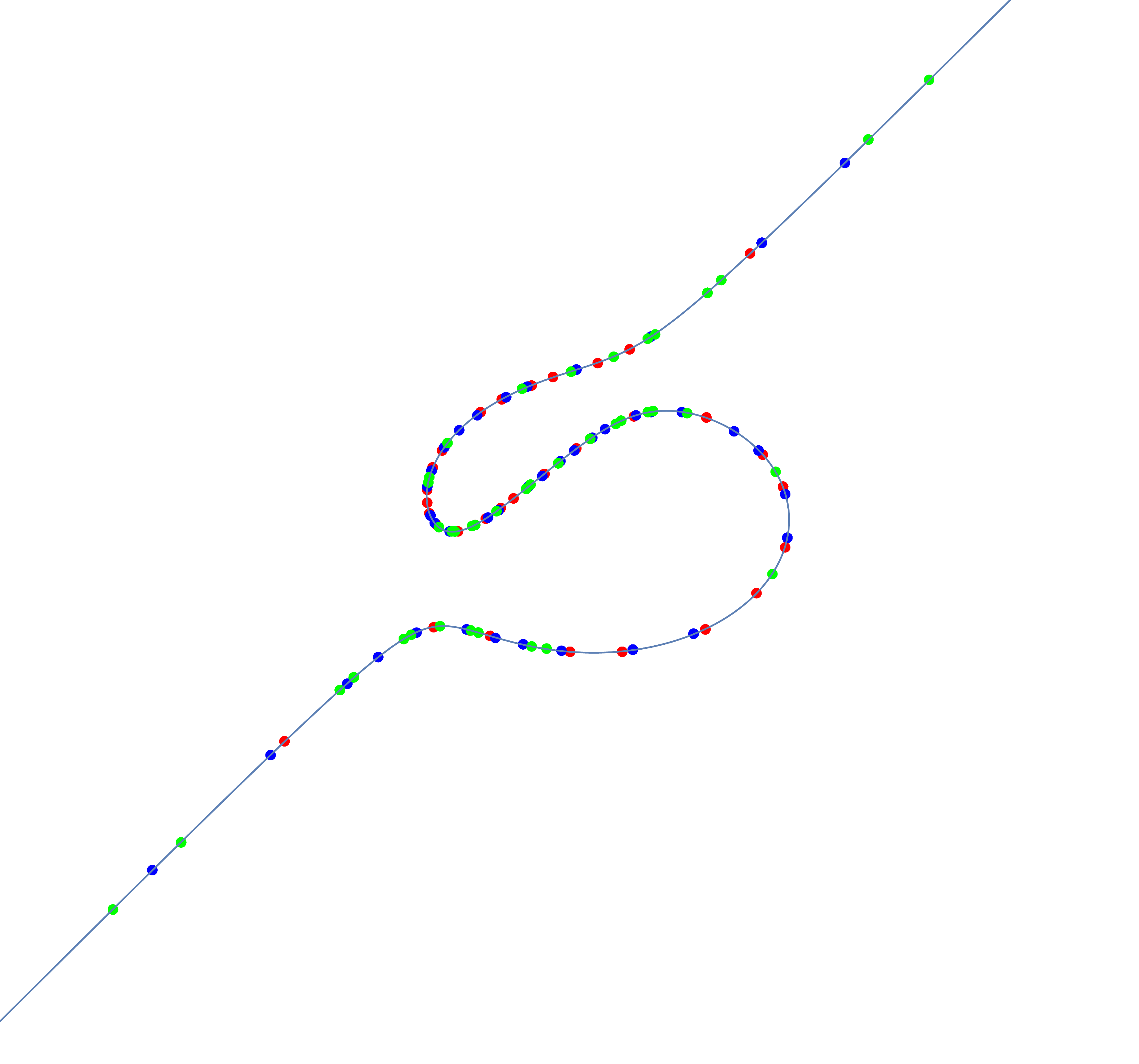}
\caption{Schroeter pairs $P$ (red), $\konj P$ (blue), and  intersection points $\dritt PP=\dritt{\konj P}{\konj P}$ of the corresponding tangents (green).}\label{fig:points}
\end{center}
\end{figure}

A priori it might be possible that Schroeter's construction does not yield {\em all\/}
cubic curves. However, the next theorem says that in fact all cubic
curves carry Schroeter's construction.

\begin{thm}
Let\/ $\C$ be a non-singular cubic curve. Let\/ $A,B,C$ be three different arbitrary points 
on~$\C$. Then, there are points\/ $\konj A,\konj B,\konj C$ on\/ $\C$ such that\/
$D=AB\wedge \bar A\bar B$, $E=BC\wedge \bar B\bar C$, $F=CA\wedge \bar C\bar A$
are points on\/ $\C$ and so do all the points given by Schroeter's construction.
\end{thm}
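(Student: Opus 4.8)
The plan is to run the argument behind Theorem~\ref{thm:main} backwards: instead of starting from three given pairs, we pick a point $T$ of order~$2$ on~$\C$ and set $\konj A:=T+A$, $\konj B:=T+B$, $\konj C:=T+C$. To prepare the ground, recall that every non-singular cubic is projectively equivalent to $\Cab:\,y^2=x^3+ax^2+bx$ and that projective maps preserve $\C$, its chords and tangents, and all incidences; hence it suffices to argue on $\Cab$, where $\nO=(0,1,0)$ is a point of inflection, $T:=(0,0,1)$ is a point of order~$2$ with $T\neq\nO$, and we use the group law $(\C,\nO,+)$ with $\dritt XY=-(X+Y)$ as in the excerpt. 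We then put $\konj P:=T+P$ for $P\in\{A,B,C\}$; since $2*T=\nO$ this gives $\konj{\konj P}=P$, and $\konj P\neq P$ because $T\neq\nO$.

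All of the geometric content is a single computation in the group law. For any points $X,Y$ on $\C$, using $2*T=\nO$,
$$\dritt{\konj X}{\konj Y}=-(\konj X+\konj Y)=-(X+Y+2*T)=-(X+Y)=\dritt XY=:S,$$
$$\dritt X{\konj Y}=-(X+T+Y)=-(\konj X+Y)=\dritt{\konj X}Y=:\konj S.$$
Thus $S$ lies on the line $XY$ and on the line $\konj X\konj Y$, and $\konj S$ lies on $X\konj Y$ and on $\konj XY$; as soon as $X,Y,\konj X,\konj Y$ are four distinct points the two lines in each pair are distinct (a line meets $\C$ in at most three points), so $S=XY\wedge\konj X\konj Y\in\C$ and $\konj S=X\konj Y\wedge\konj XY\in\C$. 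Moreover $\konj S-S=-(X+T+Y)+(X+Y)=-T=T$, so $\konj S=T+S$ and the new pair has the same shape as the starting ones. Taking $(X,Y)$ to be $(A,B)$, $(B,C)$, $(C,A)$ shows that $D=\dritt AB$, $E=\dritt BC$, $F=\dritt CA$ are exactly the points named in the statement and that all of them lie on $\C$.

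What remains is the induction from the proof of Theorem~\ref{thm:main}. Call a pair $\{P,\konj P\}$ of points of $\C$ \emph{admissible} if $\konj P=T+P$ (so that $\konj P\neq P$); the three starting pairs are admissible, and the computation above shows that Schroeter's rule sends any two admissible pairs to an admissible pair lying on~$\C$. By induction on the number of construction steps, every pair, and hence every point, produced by Schroeter's construction lies on~$\C$, which is the assertion. I expect the main work to lie not in this induction but in keeping the construction non-degenerate. Two observations make this cheap: if two input pairs are genuinely different they are automatically disjoint (if $X\in\{Y,\konj Y\}$ then, applying $T+(\cdot)$, the pairs coincide), so $X,Y,\konj X,\konj Y$ are four distinct points on $\C$; since a line meets $\C$ in at most three points, the two lines being intersected at each step are genuinely distinct; and a newly built pair can never partially overlap an old one, since $S=X$ forces $\konj S=T+S=T+X=\konj X$. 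The only genuine hypothesis left to secure is that the three starting pairs be distinct, i.e. $T\notin\{A-B,B-C,A-C\}$ (and, if one also wants the three pairs not to be the opposite vertices of a complete quadrilateral, a further open condition); this can be arranged for $A,B,C$ in general position, and when $\C$ carries more than one point of order~$2$ one simply switches $T$, while in the remaining special positions the same identities still put every constructed point on~$\C$, the only effect being that some of them coincide with points already on~$\C$. In short, the substance of the proof is the identity $\dritt{\konj X}{\konj Y}=\dritt XY$, with everything else reducing to routine checks of degenerate configurations.
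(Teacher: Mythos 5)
Your proposal is correct, and its core idea coincides with the paper's: choosing $\konj P:=T+P$ for a point $T$ of order~$2$ is exactly what the paper's opening move ``choose $\konj A$ with $\dritt AA=\dritt{\konj A}{\konj A}$'' amounts to, since that condition says precisely that $\konj A-A$ is $2$-torsion, and the paper's subsequent definitions $\konj B:=\dritt{\konj A}{(\dritt AB)}$, $\konj C:=\dritt{\konj B}{(\dritt BC)}$ again produce $B+T$ and $C+T$. Where you differ is in the verification: the paper checks the required incidences synthetically, via Lemma~\ref{lem:x}, Chasles' Theorem~\ref{thm:chasles} and Proposition~\ref{prp:Chasles89}, and then lets the good-set induction of Theorem~\ref{thm:main} carry the conclusion for all further Schroeter points, whereas you re-derive everything by the direct group-law identities $\dritt{\konj X}{\konj Y}=\dritt XY$, $\dritt X{\konj Y}=\dritt{\konj X}Y$, $\konj S-S=T$ and run your own ``admissible pair'' induction, which is in effect a streamlined version of the good-set argument specialized to $\konj P=T+P$. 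This buys self-containedness and makes the role of the $2$-torsion point completely transparent; the paper's route buys brevity by reusing its earlier machinery. Two small points: your appeal to the normal form $y^2=x^3+ax^2+bx$ to guarantee a real point of order $2$ is legitimate and is the same fact the paper invokes; and your discussion of the degenerate positions (e.g.\ $T=B-A$ on a curve with a single real point of order $2$, where the line $\konj A\konj B$ coincides with $AB$ and $D$ is not even defined) is admittedly hand-wavy, but the paper's own proof is silent on these configurations altogether, so this does not put you behind it.
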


\begin{proof}
Choose $\bar A$ such that  $\dritt AA = \dritt{\bar A}{\bar A}$ and
$\konj B:=\dritt{\konj A}({\dritt AB})$. In particular, we have $\dritt AB=\dritt{\konj A}{\konj B}$,
and, by Lemma\;\ref{lem:x}, $\dritt A{\konj B}=\dritt{\konj A}B$ and $\dritt BB=\dritt{\konj B}{\konj B}$.
Let $\konj C:=\dritt{\konj B}({\dritt BC})$. In particular, we have $\dritt BC=\dritt{\konj B}{\konj C}$,
and, by Lemma\;\ref{lem:x}, $\dritt B{\konj C}=\dritt{\konj B}C$ and $\dritt CC=\dritt{\konj C}{\konj C}$.
It follows from Chasles' Theorem\;\ref{thm:chasles} that $\dritt A{\konj C} = \dritt{\konj A}C$.
From the above, we obtain by applying Proposition\;\ref{prp:Chasles89} with
$C$ and $\konj C$ exchanged, that $\dritt AC = \dritt{\konj A}{\konj C}$.
Hence all points constructed from these points by Schroeter's construction lie on $\C$.
\end{proof}
\noindent{\bf Remarks.} Let $\C_0$ be the cubic curve passing through 
$A,\konj{A}$, $B,\konj{B}$, $C,\konj{C}$, $D,E,F$, let $\nO$ be a point of
inflection of $\C_0$, and let $E_0=(\C_0,\nO,+)$ be the corresponding elliptic
curve.
\begin{enumerate}
\item[(1)] If $C_n$ is a cyclic group of order $n$, then there is a point
on $\C_0$ of order~$n$ (with respect to $E_0$). This implies that if we
choose the six starting points in a finite subgroup of $E_0$, then 
Schroeter's construction ``closes'' after finitely many steps and we end up with
just finitely many points. However, if our~$6$ starting points are all rational
and we obtain more than~$16$ points with Schroeter's construction, then,
by Mazur's Theorem, we obtain infinitely many rational points on the cubic
curve~$\C_0$.

\item[(2)] If the elliptic curve $E_0$ has three points of order~$2$, then one
of them, say~$T$, has the property that for any point $P$ on $\C_0$ we have
$\konj{P}=P+T$. In particular, we have $\konj{T}=T+T=\nO$.
Furthermore, for the other two points of order~$2$, say $S_1$ and $S_2$, we have
$S_1=S_2+T$ and $S_2=S_1+T$, {\ie}, $S_1=\konj{S_2}$. 

\item[(3)] If we choose another point of inflection $\nO'$ on
the cubic curve $\C_0$, we obtain a different elliptic curve~$E_0'$.
In particular, we obtain different inverses of the constructed points, 
even though the constructed points are exactly the same (see Fig.\,\ref{fig-4}).

\begin{figure}[h!]
\begin{center}
\includegraphics{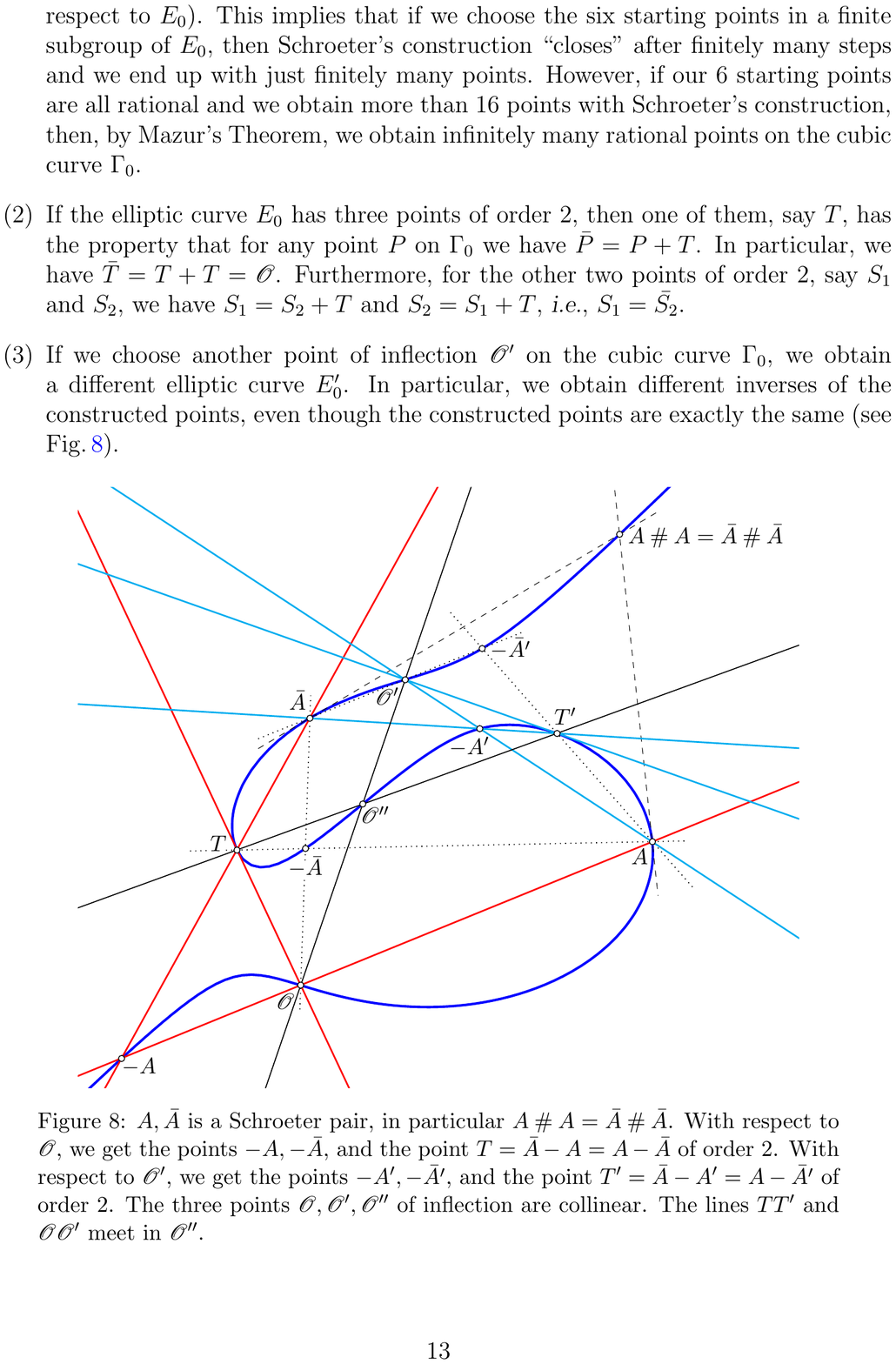}
\caption{$A,\konj A$ is a Schroeter pair, 
in particular $\dritt AA=\dritt{\konj A}{\konj A}$.
With respect to $\nO$, we get the points $-A, -\konj A$, 
and the point $T=\konj A-A = A-\konj A$ of order 2.
With respect to $\nO'$, we get the points $-A', -\konj{A'}$, 
and the point $T'=\konj A-A'= A-\konj{A'}$ of order 2.
The three points $\nO,\nO',\nO''$ of inflection are collinear.
The lines $TT'$ and $\nO\nO'$ meet in $\nO''$.
}\label{fig-4}
\end{center}
\end{figure}
\end{enumerate}
\noindent{\bf Example.} Let $A,\konj{A},B,\konj{B},C,\konj{C}$ 
be six different starting points for Schroeter's construction
such that no three points are co-linear.
By a projective transformation, we can move 
$A\mapsto (0, 0, 1)$,
$\konj{A}\mapsto (0, 1, 0)$,
$B\mapsto (1, 0, 0)$,
$\konj{B}\mapsto (1, 1, 1)$,
$C\mapsto (C_x,C_y,1)$,
$\konj{C}\mapsto (\konj{C}_x, \konj{C}_y, 1)$. Then, the corresponding
cubic curve $\C$ we obtain by Schroeter's construction is given by
the following equation:

\begin{multline*}\C:\quad 
x y^2 - x^2 y
+ x^2\,C_y \konj{C}_y 
+ y^2\left(C_x \konj{C}_x-C_x - \konj{C}_x \right)\\[1.7ex]
+ x y \left(C_x + \konj{C}_x - C_y \konj{C}_x - C_x \konj{C}_y\right)
- x\,C_y \konj{C}_y
+ y \left(C_y \konj{C}_x + C_x \konj{C}_y-C_x \konj{C}_x\right)=0
\end{multline*}


\bibliographystyle{plain}

\begin{thebibliography}{1}

\bibitem{Chasles}
Michel Chasles.
\newblock {\em Aper\c{c}u historique sur l'origine et le d\'{e}veloppement des
  m\'{e}thodes en g\'{e}om\'{e}trie}.
\newblock \'{E}ditions Jacques Gabay, Sceaux, 1989.
\newblock Reprint of the 1837 original.

\bibitem{Schroeter}
Heinrich Schroeter.
\newblock {\em {Die Theorie der ebenen Curven dritter Ordnung}}.
\newblock B.\,G.~Teubner, Leipzig, 1888.

\bibitem{Schroeter1}
Heinrich Schr\"{o}ter.
\newblock {Ueber eine besondere Curve\;$3^{ter}$ Ordnung und eine einfache
  Erzeugungsart der allgemeinen Curve\;$3^{ter}$ Ordnung}.
\newblock {\em Mathematische Annalen}, 5(1):50--82, 1872.

\bibitem{Schroeter2}
Heinrich Schr\"{o}ter.
\newblock {Ueber Curven dritter Ordnung}.
\newblock {\em Mathematische Annalen}, 6(1):85--111, 1873.

\end{thebibliography}

\end{document}